\theoremstyle{theorem}
\newtheorem{theorem}{Theorem}[section]
\newtheorem{lemma}[theorem]{Lemma}
\newtheorem{cor}[theorem]{Corollary}
\newtheorem{prop}[theorem]{Proposition}
\theoremstyle{remark}
\newtheorem{rem}[theorem]{Remark}
\newtheorem{ex}[theorem]{Example}
\theoremstyle{definition}
\newtheorem{defi}[theorem]{Definition} 
\newcommand{\mods}{\operatorname{mod}\nolimits}
\newcommand{\Ext}{\operatorname{Ext}\nolimits}
\newcommand{\Hom}{\operatorname{Hom}\nolimits}
\newcommand{\tp}{\operatorname{top}\nolimits}
\newcommand{\soc}{\operatorname{soc}\nolimits}
\newcommand{\pdim}{\operatorname{pdim}\nolimits}
\newcommand{\idim}{\operatorname{idim}\nolimits}
\newcommand{\coker}{\operatorname{coker}\nolimits}
\newcommand{\C}{\mathcal C}
\newcommand{\LT}{\Lambda(T)}
\title{Higher extensions for gentle algebras}
\author{Karin Baur, Sibylle Schroll}
\address{Institute of Mathematics and Scientific Computing, NAWI Graz, 
University of Graz, Heinrichstrasse 36, 8010 Graz, Austria. 
School of Mathematics,
University of Leeds, LS2 9JT, Leeds, UK  }
\email{ka.baur@me.com}
\address{Department of Mathematics, University of Leicester, Leicester LE1 7RH, UK }
\email{schroll@leicester.ac.uk}
\keywords{gentle algebras, syzygies, higher extensions, Jacobian algebras.}
\thanks{The first author was supported by a Royal Society Wolfson Fellowship and by 
the Austrian Science Fund Project Number P30549 and DK W1230. This work has also been supported through an EPSRC 
Early Career Fellowship EP/P016294/1 held by  the second author. 
}
\subjclass[2010]{05E10, 16G10}
\begin{document}

\maketitle

\begin{abstract}
In this paper we determine extensions of higher degree between indecomposable 
modules over gentle algebras. In particular, our results show how such
extensions either eventually vanish or 
become periodic. 
We give a geometric interpretation of vanishing and periodicity of higher extensions in 
terms of the surface underlying the gentle algebra. For 
gentle algebras arising from triangulations of surfaces, we give an explicit basis of 
higher extension spaces between 
indecomposable modules. 
\end{abstract}

\section*{Introduction}
Extensions and higher extensions play an important role in many 
areas of mathematics.  
For example, in representation theory associated to cluster algebras, vanishing of 
extensions define cluster tilting subcategories. 
The vanishing of higher extensions give rise to cotorsion pairs and to Ext-orthogonal pairs of 
subcategories \cite{KS2010}. Extensions in the case of gentle algebras have 
been considered
by various authors, see, for example, \cite{Zhang14, CS17, CPS17, BDMTY17}. 
However, so far, higher extensions of gentle algebras have not been explicitly studied.

Geometric models as in the context of cluster theory \cite{ABCP, FST} play an important role in relating different areas of mathematics and transferring ideas from one area to another. This is demonstrated by recent advances in relating representation theory with homological mirror symmetry, see for example  \cite{B16, HKK17, Qiu15}.  In this context, in 
\cite{OPS} the authors give a geometric model of the bounded derived category of a 
finite dimensional gentle algebra $\Lambda$. 
In the case that the gentle algebra is homologically smooth (and with grading 
concentrated in degree 0),  this model coincides with the model in \cite{HKK17} shown to 
be equivalent to the partially wrapped Fukaya category of the same surface with 
stops \cite{LP18}. 
The geometric models in \cite{HKK17, LP18, OPS} 
give the higher extensions between indecomposable objects, in that 
$\Ext_{\Lambda}^n(M,N) \cong \Hom_{D^b(\Lambda)} (M,N[n])$. However, since in this 
set-up, the identification of 
$D^b(\Lambda)$ with the homotopy category of 
projectives $K^{-, b}({\rm proj} \Lambda)$ is used, 
it is difficult in practice to concretely deduce higher extensions between indecomposable 
modules over gentle algebras in terms of the surface geometry. For this reason we work instead with the geometric surface model of the module category~\cite{BCS2018} where arcs correspond directly to indecomposable modules.

In this paper, we determine the higher extension spaces between indecomposable 
modules over gentle algebras by a 
detailed study of their syzygies. We note that syzygies of indecomposable modules 
over string algebras have been studied in \cite{HZS05} and over gentle algebras 
 in~\cite{kalck}. However, for our purposes, we need a more precise understanding 
of the syzygies. 
We show that the first syzygy of any indecomposable module is a direct sum 
of indecomposable 
projectives and at most two non-projective uniserial modules which are generated by 
an arrow 
(Corollary~\ref{cor:ext2}). 
We use this to show that 
that higher extensions over gentle algebras 
are either eventually zero or become periodic (Theorem~\ref{thm:characterisation}).

We geometrically determine higher extensions for Jacobian algebras 
of quivers with 
potential arising from triangulations of oriented surfaces with marked points in the boundary 
(Theorems~\ref{thm:dim-ext2}, \ref{thm:ext3}, \ref{thm:ext4}), 
using the geometric model developed in 
\cite{ ABCP, FST, Labardini}. 
We also briefly geometrically describe higher extensions for general gentle algebras using 
the geometric model of the module category of a gentle algebra given in  \cite{BCS2018}, 
building on \cite{ABCP, CSP, DRS,Schroll15}.

In particular, we note that if two indecomposable modules are given by arcs in a surface, 
the number of crossings of the two arcs has no bearing on the higher extensions. 
Instead, the higher extensions solely depend on where the two arcs start and end.


\subsection*{Acknowledgement}


The authors thank I. Canakci, R. Coelho Simoes, and A. Garcia Elsener 
for helpful discussions. 
The authors also thank A. Garcia Elsener for comments  
on an earlier version of the paper.


\section{Notation}


Let $\Lambda = KQ/I$ be a finite dimensional algebra for some algebraically closed field $K$. 
We consider the category $\mods-\Lambda$ of 
finitely generated right modules over $\Lambda$. 
We write $p=a_1\cdots a_n$ for the composition of arrows $\stackrel{a_1}{\to}\dots\stackrel{a_n}{\to}$ 
and if $\alpha$ is an arrow from vertex $i$ to vertex $j$, we write $s(\alpha) = i$ and $t(\alpha) = j$. 
For $M\in \mods$-$\Lambda$, write minimal projective resolutions as 
\[
\dots \to P^2 \to P^1 \to P^0 \to M\to 0 
\]
Denote by  $P(M)$  the projective cover of $M$. If $P(M)\stackrel{f}{\to} M$ 
is the projective cover of $M$ then 
the first syzygy of $M$ is defined as $\Omega^1(M)=\ker f$, and for $k>1$, 
$\Omega^k(M)=\Omega^1(\Omega^{k-1}(M))$. 
Similarly, we write $I(M)$ for the injective hull of $M$ and if $ M \stackrel{g}{\to} I(M)$ is the injective hull of $M$ then the first co-syzygy of $M$ is define 
as $\Omega^{-1} (M) = \coker g$, and for $k > 1$, $\Omega^{-k} (M) =  \Omega^{-1} (\Omega^{-k+1}(M))$. 

%
\subsection{Gentle Algebras}

\begin{defi}
We call a finite dimensional algebra $\Lambda = KQ/I$ with admissible ideal  $I$ \emph{gentle} if 
\begin{enumerate}
\item For every $v \in Q_0$ there are at most two arrows starting and two arrows ending at $v$.
\item For every $\alpha \in Q_1$ there exists at most one arrow $\beta$ such that $\alpha \beta \notin I$ and there exists at most one arrow $\gamma$ such that $\gamma \alpha \notin I$.
\item For every $\alpha \in Q_1$ there exists at most one arrow $\beta$ such that $t(\alpha) = s(\beta)$ and $\alpha \beta \in I$ 
and there exists at most one arrow $\gamma$ such that $t(\gamma) = s (\alpha)$ and $\gamma \alpha \in I$.
\item $I$ is generated by paths of length two.
\end{enumerate} 
\end{defi}

%
\subsection*{Notation for string modules}

For $\Lambda=KQ/I$ gentle, the indecomposable modules are string and band modules, 
\cite{WW, BR}. String modules are given by walks in the quiver, as we recall now. 
Write 
$Q_1^{-1}$ for the formal inverses of the elements of $Q_1$. 
For $\alpha\in Q_1$, set 
$s(\alpha^{-1})=t(\alpha)$ and $t(\alpha^{-1})=s(\alpha)$.

A {\em walk} is a word $w=\alpha_1\cdots \alpha_m$ with $\alpha_i\in Q_1\cup Q_1^{-1}$ such 
that $t(\alpha_i)=s(\alpha_{i+1})$ for all $i$. 
A walk is a {\em string of length $m\ge 0$} if for all $i$, $\alpha_{i+1}\ne \alpha_i^{-1}$ and 
$\alpha_i\alpha_{i+1}\notin I$ if $\alpha_i,\alpha_{i+1}\in Q_1$ and 
$\alpha_{i+1}^{-1}\alpha_i^{-1}\notin I$ if $\alpha_i,\alpha_{i+1}\in Q_1^{-1}$. 
A string of length $0$ is a trivial string at some vertex $v$, it will be denoted by $e_v$. 
A string $w=\alpha_1\cdots \alpha_m$ is {\em direct} if all $\alpha_i$ occuring are in $Q_1$, 
it is {\em inverse} if all $\alpha_i$ occuring are in $Q_1^{-1}$.

We denote by $M(w)$ the string module associated to the string $w$. 
If $w= e_v$ is a trivial string at vertex $v$ then $M(e_v)$ is the simple module at $v$ which we also denote  by $S(v)$. 

We note that band modules are indexed by 1-parameter families. The  modules at the mouth of rank one tubes in the 
Auslander-Reiten quiver are called {\em quasi-simple} and up to cyclic permutation and inverse, they are given by cyclic walks in $Q$. 

\begin{rem}\label{rem:proj-ind}
Note that the indecomposable projective  $\Lambda$-modules are of the form 
$ P_{v} = e_v \Lambda = M(p^{-1}e_vq)$ where $p$ and $q$ are maximal paths starting from $v$. The indecomposable injective $\Lambda$-modules are of the form $ I_v = D(\Lambda e_v) = M(p e_v q^{-1})$  where $p$ and $q$ are maximal paths ending at the vertex $v$.
\end{rem}

%
\subsection{Gentle algebras arising from triangulations}\label{sec:triangul-algebra}

\ 

\noindent
Let  $T$ be an (ideal) triangulation of a surface with marked points  $(S,\mathcal M)$ 
such that  
$\mathcal M\subset \partial S\ne \emptyset$. 
By \cite{ABCP, Labardini} the triangulation $T$ defines an algebra $\Lambda=\LT$ 
given by a quiver $Q=Q(T)$, 
$Q=(Q_0,Q_1)$ with vertices $Q_0$ and arrows $Q_1$, and 
maps $s,t:Q_1\to Q_0$ sending an arrow to its starting or terminating point 
respectively and an ideal $I$ induced by the natural potential (arising from the triangulation of the surface) 
and define $\Lambda(T)=KQ/I$. By \cite{ABCP}, $\LT$ is a gentle algebra. 
Recall that in particular, any oriented cycle in $Q$ is a 3-cycle with full relations.  
\begin{defi}\label{def:surface-alg}
We call an algebra $\Lambda(T)=KQ/I$ as above a {\em the gentle algebra of the triangulation $T$}. 
\end{defi}

An \emph{arc} is a homotopy class of curves connecting marked points on the boundary. By \cite{ABCP} arcs are in bijection with 
isomorphism classes indecomposable string modules over $\Lambda(T)$ and the one parameter families of band modules are in 
bijection with homotopy classes of closed curves in $(S, M)$.

Any gentle algebra arising from a triangulation is 1-Gorenstein. 
For a general gentle algebra $\Lambda$, by \cite{kalck} $M \in \mods$-$\Lambda$ is Gorenstein projective if and only if 
$M = \Omega^d(N)$ for some module $N$ where $d$ is the virtual/Gorenstein 
dimension of $\Lambda$. That is for a gentle algebra $\Lambda(T)$, $T$ a triangulation of a surface, 
$M$ is Gorenstein projective 
if and only if $M = \Omega^1(N)$ for some $N$. 
Since finite dimensional Jacobian algebras of quivers with potential  
are 2-CY tilted algebras, this also follows from~\cite{GarciaESch2017}.

%
\subsection{Extensions between string modules}\label{sec:string-exts}

Let $\Lambda=KQ/I$ be gentle, let $v$ and $w$ be strings and $M(v)$ and $M(w)$ 
the corresponding string modules. Theorem A of \cite{CPS17}, see also \cite{CS17}, describes the extensions 
of $M(v)$ by $M(w)$. More precisely, it proves that the collection of arrow and overlap extensions 
of $M(v)$ by $M(w)$, as defined below, form a basis of $\Ext^1_{\Lambda}(M(v),M(w))$. 
If there is an arrow $\alpha$ 
such that 
$w\alpha^{-1}v$ is a string, then there is a non-split short exact sequences 
\[
0\to M(w)\to M(w\alpha^{-1}v)\to M(v)\to 0. 
\]
Such an extension is called an {\em arrow extension} of $M(v)$ by $M(w)$. 
See Figure~\ref{fig:arrow-ext} for a representation in terms of strings. 
The {\em overlap extensions} of $M(v)$ by $M(w)$ are described in terms of strings 
in Figure~\ref{fig:overlap-ext}, we refer to \cite{CS17, CPS17} for more details. 

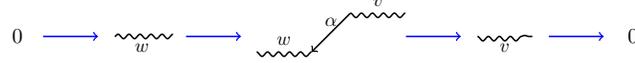
\begin{figure}[H]
\[ \resizebox{0.6\textwidth}{!}{
\begin{tikzpicture}[node distance=1cm and 1.5cm]
\coordinate[label=left:{}] (1);
\coordinate[right=1cm of 1] (2);

\coordinate[right=1.3cm of 1] (3);
\coordinate[right=1cm of 3] (4);

\coordinate[below right=.4cm of 4] (5);
\coordinate[right=1cm of 5] (6);

\coordinate[above right=1cm of 6] (7);
\coordinate[right=1cm of 7] (8);

\coordinate[right=3cm of 4] (9);
\coordinate[right=1cm of 9] (10);

\coordinate[right=.3cm of 10] (11);
\coordinate[right=1cm of 11] (12);

\coordinate[right=-.3cm of 1] (1');
\coordinate[right=-1cm of 1'] (2');
\coordinate[right=-.7cm of 2',label=right:{$0$}] (3');

\coordinate[right=.3cm of 12] (12');
\coordinate[right=1cm of 12'] (13');
\coordinate[right=.3cm of 13',label=right:{$0$}] (14');

\draw[thick,->,blue] (3)--(4);
\draw[thick,->,blue] (9)--(10);
\draw[thick,->,blue] (2')--(1');
\draw[thick,->,blue] (12')--(13');

\draw[thick,decorate,decoration={snake,amplitude=.4mm,segment length=2mm}] 
(1)-- node [anchor=north,scale=.9]{$w$} (2) 
(5)-- node [anchor=south,scale=.9]{$w$} (6) 
(7)-- node [anchor=south,scale=.9]{$v$} (8) 
(11)-- node [anchor=north,scale=.9]{$v$} (12);

\draw[thick,<-] 
(6)-- node [anchor=south,scale=.9]{$\alpha$} (7);
\end{tikzpicture} }
\]
\caption{An arrow extension of $M(v)$ by $M(w)$.} \label{fig:arrow-ext}
\end{figure}

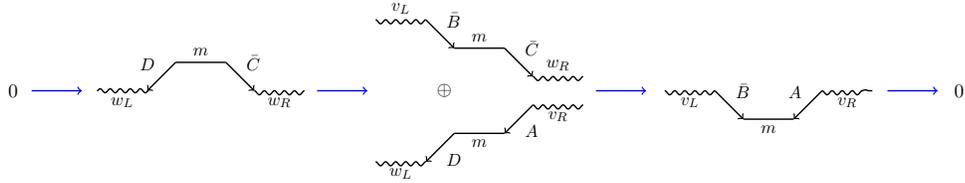
\begin{figure}[H]
\[ \resizebox{0.9\textwidth}{!}{
\begin{tikzpicture}[node distance=1cm and 1.5cm]
\coordinate[label=left:{}] (1);
\coordinate[right=1cm of 1] (2);
\coordinate[above right=.8cm of 2,label=right:{}] (3);
\coordinate[right=1cm of 3] (4);
\coordinate[below right=.8cm of 4] (5);
\coordinate[right=1cm of 5] (6);
\coordinate[above right=2cm of 6] (7);
\coordinate[right=1cm of 7] (8);
\coordinate[below right=.8cm of 8] (9);
\coordinate[right=1cm of 9] (10);
\coordinate[below right=.8cm of 10] (11);
\coordinate[right=1cm of 11] (12);
\coordinate[right=11.3cm of 1] (13);
\coordinate[right=1cm of 13] (14);
\coordinate[below right=.8cm of 14] (15);
\coordinate[right=1cm of 15] (16);
\coordinate[above right=.8cm of 16] (17);
\coordinate[right=1cm of 17] (18);

\coordinate[right=.25cm of 6] (6'');
\coordinate[right=1cm of 6''] (7'');

\coordinate[right=2.5cm of 6,label=right:{$\oplus$}] (19);

\coordinate[below right=2cm of 6] (7');
\coordinate[right=1cm of 7'] (8');
\coordinate[above right=.8cm of 8'] (9');
\coordinate[right=1cm of 9'] (10');
\coordinate[above right=.8cm of 10'] (11');
\coordinate[right=1cm of 11'] (12');
\coordinate[above right=.8cm of 12'] (13');

\coordinate[right=5.8cm of 6] (12'');
\coordinate[right=1cm of 12''] (13'');

\coordinate[right=-.3cm of 1] (1');
\coordinate[right=-1cm of 1'] (2');
\coordinate[right=-.6cm of 2',label=right:{$0$}] (3');

\coordinate[right=.3cm of 18] (18');
\coordinate[right=1cm of 18'] (19');
\coordinate[right=.2cm of 19',label=right:{$0$}] (19''');

\draw[thick,->,blue] (2')--(1');
\draw[thick,->,blue] (6'')--(7'');
\draw[thick,->,blue] (12'')--(13'');
\draw[thick,->,blue] (18')--(19');

\draw[thick,decorate,decoration={snake,amplitude=.4mm,segment length=2mm}] 
(1)-- node [anchor=north,scale=.9]{$w_L$} (2) 
(5)-- node [anchor=north,scale=.9]{$w_R$} (6) 
(7)-- node [anchor=south,scale=.9]{$v_L$} (8) 
(7')-- node [anchor=north,scale=.9]{$w_L$} (8') 
(11)-- node [anchor=south,scale=.9]{$w_R$} (12) 
(11')-- node [anchor=north,scale=.9]{$v_R$} (12') 
(13)-- node [anchor=north,scale=.9]{$v_L$} (14) 
(17)-- node [anchor=north,scale=.9]{$v_R$} (18);

\draw[thick] 
(3)-- node [anchor=south,scale=.9]{$m$} (4)
(9)-- node [anchor=south,scale=.9]{$m$} (10)
(9')-- node [anchor=north,scale=.9]{$m$} (10')
(15)-- node [anchor=north,scale=.9]{$m$} (16);

\draw[thick,->] (3)--node [anchor=south east,scale=.9]{$D$}(2); 
\draw[thick,->] (4)--node [anchor=south west,scale=.9]{$\bar{C}$}(5);
\draw[thick,->] (8)--node [anchor=south west,scale=.9]{$\bar{B}$}(9);
\draw[thick,<-] (8')--node [anchor=north west,scale=.9]{$D$}(9');
\draw[thick,->] (10)--node [anchor=south west,scale=.9]{$\bar{C}$}(11);
\draw[thick,<-] (10')--node [anchor=north west,scale=.9]{$A$}(11');
\draw[thick,->] (14)--node [anchor=south west,scale=.9]{$\bar{B}$}(15);
\draw[thick,->] (17)--node [anchor=south east,scale=.9]{$A$}(16);
\end{tikzpicture} }
\]
\caption{An overlap extension of $M(v)$ by $M(w)$ with overlap $m$.} \label{fig:overlap-ext}
\end{figure}

%
\section{Modules generated by an arrow}
%

Let $\Lambda=KQ/I$ be a gentle algebra. In this section we give a detailed study of modules generated by arrows by recalling and expanding
the results  from \cite{kalck} where it is shown that certain modules generated by an arrow are Gorenstein projective. We will see in the next section that in general the modules generated by an arrow constitute the non-projective summands of syzygies over gentle algebras. 

For $\alpha\in Q_1$,  set $R(\alpha)=\alpha\Lambda$. 
Furthermore, define $U(\alpha) = D(\Lambda \alpha)$.  
Since $\Lambda$ is gentle, $R(\alpha)$ and $U(\alpha)$ are  uniserial. 

\begin{defi}\label{def:cycles-n}
For $\Lambda=KQ/I$ gentle, denote by 
$\C(\Lambda)_n$ the set of arrows in cycles of length $n$ with full relations, i.e. for 
any two consecutive arrows $\alpha,\beta$ in such a cycle, $\alpha\beta\in I$. 
We define 
$\C(\Lambda)= \cup_{n>0} \C(\Lambda)_n$.  
\end{defi}

\begin{rem}\label{rem:Ralpha}
Let $\alpha$ be in  $ Q_1$. \\
(1) If there is no arrow $\beta$ such that $\alpha\beta\notin I$,  we have 
$R(\alpha)=M(e_{t(\alpha)}) = S(e_{t(\alpha)})$. 
Otherwise, $R(\alpha)=M(p)$ where $p$ is the maximal direct path 
starting with the unique arrow $\beta$ 
such that $\alpha\beta\notin I$. \\
Similarly,  if there is no arrow $\gamma$ such that $\gamma \alpha \notin I$, $U(\alpha) = M(e_{s(\alpha)}) = S(e_{s(\alpha)})$. 
Otherwise, $U(\alpha)=M(q)$ where $q$ is the maximal direct path 
ending with the unique arrow $\gamma$ 
such that $\gamma \alpha \notin I$. \\
(2) Consider $P_{t(\alpha)}$ and $I_{s(\alpha)}$ for $\alpha \in \C(\Lambda)$. 
Then there is a cycle $\alpha_0\alpha_1\cdots \alpha_n$ where $\alpha=\alpha_0$, 
such that $\alpha_i\alpha_{i+1}\in I$ for all $i$ (reducing modulo $n$). 
If there is no $\epsilon \in Q_1 $ such that $\alpha \epsilon \notin I$ then 
$P_{t(\alpha)} = R(\alpha)$. 
Otherwise $P_{t(\alpha)} $ is biserial, that is 
$P_{t(\alpha)} = M(q p)$ 
for $q$ an inverse string ending in $\alpha_1^{-1}$ and $p$ is a string as in (1). 
If there is no $\delta \in Q_1$ such that $\delta \alpha \notin I$ then 
$I_{s(\alpha)} = U(\alpha)$. Otherwise $I_{s(\alpha)}$ is biserial, that is  $I_{s(\alpha)} = M(r s)$ for $r$ a string as in (1) and $s$ an inverse string 
starting with $\alpha_n^{-1}$.  \\
(3) $R(\alpha)$ is a direct summand of the radical of the projective 
$P_{s(\alpha)}=e_{s(\alpha)}\Lambda$ 
(cf.~\cite{kalck}) and  $U(\alpha)$ is a direct summand of $I_{t(\alpha)} / \soc I_{t(\alpha)}$.
\end{rem}

\begin{lemma}\label{lm:R-alpha} Let $\Lambda=KQ/I$ be a gentle algebra 
and let $\alpha$ be in $Q_1$. \\
(1) $R(\alpha)$ is projective if and only if 
there is no $\beta \in Q_1$ such that $\alpha \beta \in I$;\\
(2) 
Suppose that there exist
$\alpha_1, \alpha_2, \ldots, \alpha_n$, all distinct, where $\alpha_1 = \alpha$ is such that 
$\alpha_i \alpha_{i+1} \in I$ for all $i<n$.
Then there are short exact sequences  
\[
\begin{array}{c}
0\to R(\alpha_2)\to P(R(\alpha_1)) \to R(\alpha_1)\to 0 \\
0\to R(\alpha_3)\to P(R(\alpha_2)) \to R(\alpha_2)\to 0 \\
\vdots \\
0\to R(\alpha_n)\to P(R(\alpha_{n-1})) \to R(\alpha_{n-1})\to 0
\end{array}
\]

Furthermore, $\pdim R(\alpha)=n$ if and only if 
there exists no arrow $\beta$ with 
$\alpha_n\beta\in I$ and $\pdim R(\alpha)=\infty$ if and only if 
$\alpha \in \C(\Lambda)$. 
\end{lemma}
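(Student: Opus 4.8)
The plan is to prove each claim by analyzing the projective cover of $R(\alpha)$ using the explicit description of $R(\alpha)$ and the indecomposable projectives for gentle algebras.

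For part (1), I would start from Remark~\ref{rem:Ralpha}(1): if there is no arrow $\beta$ with $\alpha\beta\notin I$, then $R(\alpha)=S(t(\alpha))$, and otherwise $R(\alpha)=M(p)$ where $p$ is the maximal direct path beginning with the unique $\beta$ satisfying $\alpha\beta\notin I$. The key observation is that $R(\alpha)=M(p)$ is itself projective precisely when $M(p)$ equals the indecomposable projective $P_{s(\beta)}=P_{t(\alpha)}$. Comparing $M(p)$ with the description of $P_{t(\alpha)}$ in Remark~\ref{rem:Ralpha}(2), the projective $P_{t(\alpha)}$ is uniserial (and hence equals $R(\alpha)$) exactly when there is no inverse part, i.e.\ no arrow feeding into $t(\alpha)$ that begins a second maximal path. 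Since $R(\alpha)$ is uniserial and generated in degree $0$, it is projective iff it coincides with $P_{t(\alpha)}$, which happens iff there is no $\beta\in Q_1$ with $\alpha\beta\in I$; this is precisely the dichotomy I want to extract.

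For part (2), I would construct the short exact sequences one at a time. Given the chain $\alpha_1,\dots,\alpha_n$ with $\alpha_i\alpha_{i+1}\in I$, consider $R(\alpha_i)=M(p_i)$ where $p_i$ starts with the unique arrow $\beta_i$ satisfying $\alpha_i\beta_i\notin I$. The projective cover $P(R(\alpha_i))$ is the indecomposable projective at the top vertex $t(\alpha_i)=s(\beta_i)$, and the kernel of $P(R(\alpha_i))\to R(\alpha_i)$ is the syzygy $\Omega^1(R(\alpha_i))$. The heart of the computation is to identify this kernel with $R(\alpha_{i+1})$: since $\alpha_i\alpha_{i+1}\in I$, the arrow $\alpha_{i+1}$ is the arrow forced into the relation at $t(\alpha_i)$, and the radical summand of $P_{t(\alpha_i)}$ that is \emph{not} reached from the top along $\beta_i$ is exactly $R(\alpha_{i+1})=\alpha_{i+1}\Lambda$. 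This uses the biserial structure of the projective in Remark~\ref{rem:Ralpha}(2) together with the gentle conditions guaranteeing uniqueness of the arrows involved.

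Finally, for the two equivalences about $\pdim R(\alpha)$, I would iterate the short exact sequences: $\Omega^k(R(\alpha))=R(\alpha_{k+1})$ as long as the chain can be extended. By part (1), the process terminates with a projective syzygy exactly when we reach an $\alpha_n$ admitting no arrow $\beta$ with $\alpha_n\beta\in I$, giving $\pdim R(\alpha)=n$. If instead the chain never terminates, the arrows must eventually repeat, and by the gentle/distinctness setup the only way to repeat is to close into a cycle with full relations, i.e.\ $\alpha\in\C(\Lambda)$, in which case the syzygies cycle periodically through the $R(\alpha_i)$ and the projective dimension is infinite. I expect the main obstacle to be the precise identification of the kernel $\Omega^1(R(\alpha_i))$ with $R(\alpha_{i+1})$ as a submodule of the biserial projective $P_{t(\alpha_i)}$, since this requires carefully tracking which maximal path is killed by passing to the radical and invoking the gentle axioms to rule out extra arrows; once that identification is secured, the inductive assembly of the sequences and the two $\pdim$ equivalences follow in a routine manner.
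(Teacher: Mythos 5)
Your overall route is the same as the paper's: for (1) you compare $R(\alpha)$ with the explicit description of $P_{t(\alpha)}$ from Remark~\ref{rem:Ralpha}; for (2) you identify the kernel of the projective cover $P_{t(\alpha_i)}\to R(\alpha_i)$ with the complementary radical branch $\alpha_{i+1}\Lambda=R(\alpha_{i+1})$; and you obtain the two $\pdim$ equivalences by iterating these sequences, using (1) for termination and using finiteness of $Q_1$ together with the gentle uniqueness of relation-partners to force the chain to close into a cycle of full relations, i.e.\ $\alpha\in\C(\Lambda)$, in the infinite case. Part (2) and the $\pdim$ statements are sound as planned and mirror the paper's argument (the paper phrases the kernel identification in terms of strings, $P_{t(\alpha_1)}=M(q^{-1}\alpha_2^{-1}p)$ with $R(\alpha_1)=M(p)$ and kernel $M(q)=R(\alpha_2)$, but this is the same computation).

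There is, however, a genuine flaw in your justification of part (1). You assert that $P_{t(\alpha)}$ ``is uniserial (and hence equals $R(\alpha)$)'' precisely when there is no second maximal path out of $t(\alpha)$. The parenthetical implication is false in the degenerate case where some $\beta$ with $\alpha\beta\in I$ exists but no $\gamma$ with $\alpha\gamma\notin I$ exists: take $Q\colon 1\xrightarrow{\ \alpha\ }2\xrightarrow{\ \beta\ }3$ with $I=(\alpha\beta)$. Here $R(\alpha)=S(2)$ is simple, while $P_{t(\alpha)}=P_2=M(\beta)$ is uniserial of dimension two; so $P_{t(\alpha)}$ is uniserial yet does not equal $R(\alpha)$, and $R(\alpha)$ is not projective, exactly as the lemma requires since $\alpha\beta\in I$. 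Taken literally, your criterion would declare $R(\alpha)$ projective in this example and thereby contradict the very statement you are proving: uniseriality of $P_{t(\alpha)}$ is not the right test; the right test is whether $P_{t(\alpha)}$ has a branch through an arrow $\beta$ with $\alpha\beta\in I$. This is why the paper's proof of (1) splits into two cases according to whether a $\gamma$ with $\alpha\gamma\notin I$ exists, treating the case $R(\alpha)=S_{t(\alpha)}$ separately. (Two smaller points: ``arrow feeding into $t(\alpha)$'' should read ``arrow starting at $t(\alpha)$'', since arrows ending at $t(\alpha)$ are irrelevant to $P_{t(\alpha)}$; and the same degenerate case occurs in (2), where your radical-branch argument happens to survive because when $\beta_i$ does not exist the whole radical is $\alpha_{i+1}\Lambda$ --- but it should be stated.) The fix is local: add the case split in (1), after which the rest of your plan goes through as in the paper.
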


\begin{proof}
(1) Suppose first that $R(\alpha)$ is projective, that is, $R(\alpha)=P_{t(\alpha)}$. 
Suppose further that there is $\beta \in Q_1$ such that $\alpha \beta \in I$. 
Then by Remark~\ref{rem:Ralpha} (2), $P_{t(\alpha)} = M(w)$ where $w$ 
is a string $ q \beta^{-1} p$ where $q$ is a possibly zero inverse string such 
that $q \beta^{-1} \notin I$ and where $p$ is a possibly zero direct string.  
Suppose first that there exists 
no $\gamma \in Q_1$ such that $\alpha \gamma \notin I$ then 
$R(\alpha) = S_{t(\alpha)}$, that is,
$R(\alpha)$ is the simple module at $t(\alpha)$. But $S_{t(\alpha)}$ is only 
projective if it is 
the projective at $t(\alpha)$, a contradiction. 
Suppose now that there exists $\gamma$ such that $\alpha \gamma \notin I$. 
Let 
$\gamma_1 \cdots \gamma_n$ be the maximal sequence of arrows 
where $\gamma=\gamma_1$ 
and $\gamma_i \gamma_{i+1} \notin I$. 
Then $R(\alpha)$ is the 
uniserial corresponding to this sequence and it is not projective since the string 
of $P_{t(\alpha))}$ has as substring $\beta^{-1} \gamma_1 \ldots \gamma_n$. 
Thus we have shown that if $R(\alpha)$ is projective then there is no 
$\beta \in  Q_1$ such that $\alpha \beta \in I$.
\\
The converse is immediate, since 
if $R(\alpha)$ is not projective, there exist $\beta$ such that $\alpha\beta\in I$. 
\\
(2) By definition we have that $\alpha_1 \alpha_2 \in I$. 
Suppose first that there exists no $\epsilon \in Q_1$ such that $\alpha_1 \epsilon \notin I$. Then by 
Remark~\ref{rem:Ralpha} (1) and (2), 
$R(\alpha_1) = M(e_{t(\alpha_1)})$ and $P_{t(\alpha_1)} = M(\alpha_2 q)$ where $q$ is the maximal direct string such that $\alpha_2 q\notin I$. 
Hence $\Omega^1(R(\alpha_1)) = M(q) = R(\alpha_2)$. Suppose now that there exists 
$\epsilon \in Q_1$ such that $\alpha_1 \epsilon \notin I$. Then by Remark~\ref{rem:Ralpha} (1), $R(\alpha_1) = M(p)$ where 
$p$ is the unique maximal direct string such that $\alpha_1 p \notin I$ and by 
Remark~\ref{rem:Ralpha} (2), 
$P_{t(\alpha_1)} = M(q^{-1} \alpha_2^{-1} p)$. 
Hence we have again that 
$\Omega^1(R(\alpha_1)) = M(q) = R(\alpha_2)$ and if there exists no 
$\beta$ such that $\alpha_n\beta \in I$, it follows that $\pdim R(\alpha)=n$. 

If $\pdim R(\alpha)=n$, then $R(\alpha_n)$ is projective and it follows from (1) that there exists no 
arrow $\beta$ with $\alpha_n\beta\in I$. 

If any of the $\alpha_i$ are in $\C(\Lambda)$ then since $\Lambda $ is gentle, $\alpha \in \C(\Lambda)$ 
and $R(\alpha_m) = R(\alpha)$ for some $m$. On the other hand if 
$\pdim R(\alpha)=\infty$ then since $\Lambda$ is finite dimensional, there exits some $m> n$ such that 
$R(\alpha_m)=\Omega^{m-1} R(\alpha_1)$ and $\alpha_m=\alpha_1$. Hence $\alpha = \alpha_1 \in C(\Lambda)$. 
\end{proof}

\begin{lemma}\label{lm:U-version}
Let $\Lambda=KQ/I$ be a gentle algebra 
and let $\alpha$ be in $Q_1$. \\
(1) $U(\alpha)$ is injective if and only if 
there is no $\gamma \in Q_1$ such that $\gamma \alpha  \in I$;\\
(2) Suppose that there exist 
$\alpha_1,\dots, \alpha_n$, all distinct, where $\alpha_1=\alpha$ and 
where $\alpha_i\alpha_{i+1}\in I$ for all $i<n$ 
then there are short exact sequences
\[
\begin{array}{c}
0\to U(\alpha_n)\to I(U(\alpha_{n})) \to U(\alpha_{n-1})\to 0 \\
0\to U(\alpha_{n-1})\to I(U(\alpha_{n-1})) \to U(\alpha_{n-2})\to 0 \\
\vdots \\
0\to U(\alpha_2)\to I(U(\alpha_{2})) \to U(\alpha_1)\to 0. 
\end{array}
\]
Furthermore, $\idim U(\alpha)=n$ if and only if there exists no arrow $\gamma$ with 
$\alpha_n\gamma\in I$ and $\idim U(\alpha)=\infty$ if and only if 
$\alpha_i\in \C(\Lambda)$ for some $i$. 
\end{lemma}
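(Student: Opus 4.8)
The plan is to obtain Lemma~\ref{lm:U-version} as the injective-costatement dual of Lemma~\ref{lm:R-alpha}. The two statements are related by the standard duality $D=\Hom_K(-,K)$, which sends right $\Lambda$-modules to left $\Lambda$-modules (equivalently right $\Lambda^{\mathrm{op}}$-modules), projectives to injectives, and syzygies to cosyzygies. Under this duality the construction $R(\beta)=\beta\Lambda$ and $U(\alpha)=D(\Lambda\alpha)$ are interchanged, since $U(\alpha)=D(\Lambda\alpha)$ is by definition the $D$-image of the left-module analogue of $R$. Moreover $\Lambda^{\mathrm{op}}=KQ^{\mathrm{op}}/I^{\mathrm{op}}$ is again gentle, and the relations $\alpha_i\alpha_{i+1}\in I$ in $\Lambda$ become relations of the reversed arrows in $\Lambda^{\mathrm{op}}$, so a path with full relations in $\Lambda$ corresponds to a path with full relations in $\Lambda^{\mathrm{op}}$, giving a bijection $\C(\Lambda)\cong\C(\Lambda^{\mathrm{op}})$. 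Thus the entire content of Lemma~\ref{lm:R-alpha}, applied to $\Lambda^{\mathrm{op}}$, dualizes termwise to the desired statement.

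Concretely, I would first record that applying $D$ to the projective resolution data of Lemma~\ref{lm:R-alpha} over $\Lambda^{\mathrm{op}}$ produces injective copresentation data over $\Lambda$. The short exact sequences
\[
0\to R(\alpha_{i+1})\to P(R(\alpha_i))\to R(\alpha_i)\to 0
\]
in $\mods\text{-}\Lambda^{\mathrm{op}}$ become, after applying the exact contravariant functor $D$ and using $D(P(R(\alpha_i)))=I(U(\alpha_i))$ and $D(R(\alpha_j))=U(\alpha_j)$, the sequences
\[
0\to U(\alpha_i)\to I(U(\alpha_i))\to U(\alpha_{i+1})\to 0
\]
in $\mods\text{-}\Lambda$; reindexing gives exactly the displayed system in the lemma. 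The statement (1) that $R(\alpha)$ is projective over $\Lambda^{\mathrm{op}}$ iff no $\beta$ has $\alpha\beta\in I^{\mathrm{op}}$ dualizes to the statement that $U(\alpha)$ is injective over $\Lambda$ iff no $\gamma$ has $\gamma\alpha\in I$, since composition order reverses under passing to the opposite algebra. Finally, $\pdim_{\Lambda^{\mathrm{op}}} R(\alpha)=\idim_\Lambda U(\alpha)$, and the characterizations of finite projective dimension equalling $n$ and of infinite projective dimension via membership in $\C(\Lambda^{\mathrm{op}})$ transfer directly.

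The main obstacle, such as it is, is bookkeeping rather than mathematical depth: one must verify carefully that the functor $U$ over $\Lambda$ really is $D\circ R^{\mathrm{op}}$, i.e. that $U(\alpha)=D(\Lambda\alpha)$ matches $D$ applied to the right-$\Lambda^{\mathrm{op}}$-module $\alpha\Lambda^{\mathrm{op}}$, and that arrow compositions and the order of the sequence of arrows $\alpha_1,\dots,\alpha_n$ reverse correctly under the duality (so that in the dual statement the sequence is built by the same relation $\alpha_i\alpha_{i+1}\in I$ read in the original algebra, not the opposite one). Because the paper states the hypotheses of Lemma~\ref{lm:U-version} using the \emph{same} relations $\alpha_i\alpha_{i+1}\in I$ as in Lemma~\ref{lm:R-alpha}, I would want to check that the duality indeed produces this same indexing rather than its reverse; once that compatibility is confirmed, the proof is an immediate application of duality to the already-proved Lemma~\ref{lm:R-alpha}. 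Alternatively, if one prefers to avoid invoking the opposite algebra explicitly, the identical argument to Lemma~\ref{lm:R-alpha} can be run directly using Remark~\ref{rem:Ralpha}, computing cosyzygies via injective hulls and the biserial description of $I_{s(\alpha)}$; this is longer but self-contained, and I would include whichever version the surrounding exposition favors.
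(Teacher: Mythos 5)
Your proposal is correct and takes essentially the same approach as the paper: the paper's entire proof is the single sentence that the argument is ``analogous to the proof of Lemma~\ref{lm:R-alpha}'', i.e.\ the dual argument, which your opposite-algebra duality $D=\Hom_K(-,K)$ simply makes precise (and your fallback of re-running the argument directly with injective hulls via Remark~\ref{rem:Ralpha} is literally what the paper intends). The indexing compatibility you flag does check out: dualizing shows the cosyzygy moves backwards along the relation chain, $\Omega^{-1}(U(\alpha_{i+1}))=U(\alpha_i)$ when $\alpha_i\alpha_{i+1}\in I$, which is exactly the order in which the lemma's short exact sequences are displayed.
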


The proof of Lemma~\ref{lm:U-version} is analogous to the proof of Lemma~\ref{lm:R-alpha}.

\begin{rem}
Note that if $M$ is an injective module over a gentle algebra $KQ/I$, 
i.e. $M=M(pq^{-1})$ for $p,q$ direct paths in $Q$, the syzygies 
$\Omega^i(M)$ have been determined for all $i$ in~\cite{gr2005}.
\end{rem}

The following is an immediate consequence of Lemma~\ref{lm:R-alpha} and~\ref{lm:U-version} and their proofs. 

\begin{cor}\label{cor:Ralpha-cyclic}
If $\alpha$ is in an $n$-cycle with full relations, then 
$\Omega^n(R(\alpha))=R(\alpha)$. 
\end{cor}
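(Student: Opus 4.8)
The plan is to read the result directly off the syzygy computation already established in Lemma~\ref{lm:R-alpha}(2), observing that a full-relations cycle makes that computation close up on itself. First I would unpack the hypothesis via Definition~\ref{def:cycles-n}: if $\alpha$ lies in an $n$-cycle with full relations, there is a cyclic sequence of arrows $\alpha = \alpha_0, \alpha_1, \ldots, \alpha_{n-1}$ with $\alpha_i\alpha_{i+1}\in I$ for all $i$, where indices are read modulo $n$, so that in particular $\alpha_{n-1}\alpha_0\in I$. These $n$ arrows are distinct, so the hypotheses of Lemma~\ref{lm:R-alpha}(2) are met for each consecutive pair.

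Next I would extract from the proof of Lemma~\ref{lm:R-alpha}(2) the local statement that whenever $\alpha_i\alpha_{i+1}\in I$ one has $\Omega^1(R(\alpha_i)) = R(\alpha_{i+1})$; the short exact sequence $0\to R(\alpha_{i+1})\to P(R(\alpha_i))\to R(\alpha_i)\to 0$ exhibits $R(\alpha_{i+1})$ as the kernel of the projective cover, hence as the first syzygy. Crucially, that computation is entirely local: it inspects only the projective cover $P_{t(\alpha_i)}$ and its radical structure via Remark~\ref{rem:Ralpha}, and uses nothing beyond the single relation $\alpha_i\alpha_{i+1}\in I$. Iterating along the cycle then gives $\Omega^k(R(\alpha)) = R(\alpha_k)$ for $0\le k\le n-1$.

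The step I would flag as the crux is the wrap-around at $k=n-1$: I want $\Omega^1(R(\alpha_{n-1})) = R(\alpha_0) = R(\alpha)$. Since the syzygy computation is local and $\alpha_{n-1}\alpha_0\in I$ by the full-relations hypothesis, the same argument applies verbatim with the role of $\alpha_{i+1}$ played by $\alpha_0$, closing the cycle. Composing the $n$ steps then yields $\Omega^n(R(\alpha)) = R(\alpha)$. As a consistency check I would also note that no $R(\alpha_i)$ is projective: by Lemma~\ref{lm:R-alpha}(1) this would require the absence of any $\beta$ with $\alpha_i\beta\in I$, which is contradicted by $\alpha_i\alpha_{i+1}\in I$. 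Hence each syzygy is genuinely non-zero and non-projective, in agreement with $\pdim R(\alpha)=\infty$.
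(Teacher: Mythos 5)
Your proof is correct and takes essentially the same route as the paper: the paper presents this corollary as an immediate consequence of Lemma~\ref{lm:R-alpha} and its proof, which is exactly what you spell out --- iterating the local syzygy computation $\Omega^1(R(\alpha_i))=R(\alpha_{i+1})$ from the proof of Lemma~\ref{lm:R-alpha}(2) around the cycle, with the wrap-around step $\Omega^1(R(\alpha_{n-1}))=R(\alpha_0)$ justified by the relation $\alpha_{n-1}\alpha_0\in I$. Your closing consistency check (no $R(\alpha_i)$ is projective, matching $\pdim R(\alpha)=\infty$) is a harmless addition.
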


\begin{defi}
If $w$ is a direct string such that there exists no $\alpha\in Q_1$ such that $w\alpha\notin I$ (respectively, 
$\alpha w \notin I$), we 
say that $w$ is a {\em right (respectively, left) maximal string}. Similarly, we define left and right maximal 
for inverse strings. 
For $R(\alpha)=M(p)$ with $p$ right maximal, we call $p$ the {\em string of $R(\alpha)$}. 
\end{defi}

\begin{prop}\label{prop:arrow-ext}
Let $\Lambda = KQ/I$ be gentle and let $M = M(w)$ be an indecomposable string module over $\Lambda$, 
$\alpha\in Q_1$ with $p$ the string of $R(\alpha)$. Then 
$\Ext_\Lambda^1(R(\alpha),M)\ne 0$ if and only if there exists an arrow $\beta$ 
such that $\alpha\beta\in I$ and $w\beta^{-1}p$ is a non-zero string. \\
In that case, $\Ext_\Lambda^1(R(\alpha),M)$ is one-dimensional and 
is the arrow extension given by $\beta$. 
\end{prop}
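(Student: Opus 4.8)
The plan is to apply Theorem A of \cite{CPS17} (recalled in Section~\ref{sec:string-exts}), which says that the arrow extensions and overlap extensions of $M(p) = R(\alpha)$ by $M(w) = M$ together form a basis of $\Ext^1_\Lambda(R(\alpha),M)$. So it suffices to (i) determine the arrow extensions, showing there is at most one and that it exists precisely under the stated condition, and (ii) show that there are no overlap extensions. Throughout I write $p$ for the string of $R(\alpha)$, a right maximal direct string with $s(p)=t(\alpha)$; if $p$ is non-trivial its first letter is the unique arrow $\beta'$ with $\alpha\beta'\notin I$ (Remark~\ref{rem:Ralpha}(1)), and the case $p=e_{t(\alpha)}$ is handled uniformly below.

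For the arrow extensions, recall from Figure~\ref{fig:arrow-ext} that these correspond to arrows $\gamma$ for which $w\gamma^{-1}p$ is a string. First I would read off the connectivity and string conditions: $s(\gamma)=s(p)=t(\alpha)$, and the junction $\gamma^{-1}p$ forces $\gamma\neq\beta'$ (otherwise $\gamma^{-1}\beta'$ contains $\beta'^{-1}\beta'$). By the gentle conditions, the arrows starting at $t(\alpha)$ number at most two; condition~(2) gives at most one arrow $\beta'$ with $\alpha\beta'\notin I$, and condition~(3) gives at most one arrow $\beta$ with $\alpha\beta\in I$. Hence any admissible $\gamma\neq\beta'$ must equal this $\beta$, which exists exactly when there is an arrow $\beta$ with $\alpha\beta\in I$. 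Therefore there is at most one arrow extension, and it occurs precisely when such a $\beta$ exists and $w\beta^{-1}p$ is a (non-zero) string; this is the stated condition, and the extension is the arrow extension given by $\beta$, contributing at most one to the dimension.

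For the overlap extensions I would argue that none exist, using that $p$ is both \emph{direct} and \emph{right maximal}. Inspecting the form of an overlap extension in Figure~\ref{fig:overlap-ext}, the quotient term $v=p$ would be written as $v_L\,\bar{B}\,m\,A^{-1}\,v_R$, so that the overlap $m$ is immediately followed, at its right end, by an inverse letter $A^{-1}$ (forming a peak), while on the $w$-side $m$ is followed by a direct arrow $\bar{C}$ with $(\text{last arrow of }m)\bar{C}\notin I$. Since $p$ is a direct string it contains no inverse letters, so the configuration with $A^{-1}$ present cannot occur; the overlap would have to reach the right end of $p$. But then, if $m$ has positive length, its last arrow is the last arrow of $p$, and the arrow $\bar{C}$ following $m$ in $w$ contradicts the right maximality of $p$. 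I would then dispose of the remaining degenerate configurations (a trivial overlap, or $m$ a common right end of both $p$ and $w$): in each of these the required branching/boundedness making the overlap a non-split, genuinely new extension fails, or the putative extension coincides with the arrow extension already counted. Hence no overlap extension contributes.

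Combining (i) and (ii) gives $\dim\Ext^1_\Lambda(R(\alpha),M)\le 1$, with equality exactly under the stated condition, in which case the unique nonzero class is the arrow extension given by $\beta$. The hard part is step (ii): the figure describes only the generic overlap configuration, so the real work is a careful case analysis of the degenerations where the overlap meets an end of $p$ or of $w$, or is trivial, verifying that directness and right maximality of $p$ rule out each one (or force it to be the already-counted arrow extension). These boundary cases are precisely where the overlap-extension formalism of \cite{CPS17} carries its side conditions, so they must be checked rather than assumed.
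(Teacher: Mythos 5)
Your proposal is correct and takes essentially the same route as the paper's own proof: both invoke Theorem~A of \cite{CPS17} to reduce to arrow and overlap extensions, rule out overlap extensions using that $p$ is direct (so the overlap must sit at the right end of $p$) together with right maximality of $p$ (so no arrow $\bar{C}$ can follow the overlap in $w$), and pin down the unique admissible arrow $\gamma=\beta$ via the gentle conditions at $t(\alpha)$. The degenerate configurations you defer are dispatched in the paper by precisely the observation you sketch --- once neither $A^{-1}$ nor $\bar{C}$ is available, both $p$ and $w$ end in the overlap and the putative overlap extension is split (while for trivial $p$ the gentle condition on relations excludes the two arrows $D\neq\bar{C}$ that would be needed) --- so the tools you name do suffice to close those cases.
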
 

\begin{rem}
Note that if $\Lambda=\LT$ 
in Proposition~\ref{prop:arrow-ext}, then 
$\Ext_\Lambda^1(R(\alpha),M) \neq 0$ implies that 
$\alpha$ is in $\C(\Lambda)$. 
\end{rem}

\begin{proof}[Proof of Proposition~\ref{prop:arrow-ext}]
By \cite[Theorem A]{CPS17}, if $\Ext_\Lambda^1(R(\alpha),M)\ne 0$, these 
extensions are arrow or overlap extensions. 
We show first that there can be no overlap extensions. So suppose 
there exists an overlap extension with overlap $m$, see Figure~\ref{fig:overlap-ext}. 
Then $p=v_Lm$ is a direct string and $v_R$ is trivial. 
Since $p$ is right maximal, $m$ is right maximal and therefore, 
$w_R$ is trivial. But this contradicts the fact there is an overlap extension as 
both $w$ and $p$ end in the overlap in this case. 

Therefore, any extension must be an arrow extension with an arrow $\gamma$ such that 
$w\gamma^{-1}p$ is a non-zero string, \cite[Theorem A]{CPS17}.

If there exists no $\beta$ such that $\alpha\beta\in I$ then $R(\alpha)$ is projective 
(Lemma~\ref{lm:R-alpha} (1)) 
and $\Ext_\Lambda^1(R(\alpha),M)=0$. So assume there exists $\beta$ 
with $\alpha\beta\in I$. 
Then 
$\Ext_\Lambda^1(R(\alpha),M)$ is an arrow extension 
if and only if 
$w\beta^{-1}$ is a non-zero string, \cite[Theorem A]{CPS17}. 
This completes the first part. 

To see that $\dim\Ext_\Lambda^1(R(\alpha),M)=1$, note that 
there must be at least one arrow extension with some arrow $\gamma$. 
In particular, $s(p)=s(\gamma)$. 
Since $\alpha\beta\in I$, $\beta$ is not an arrow in $p$ and as $\Lambda$ is gentle, 
$\gamma=\beta$. 
\end{proof}

\begin{cor}\label{cor:self-ext} 
Let $\Lambda = KQ/I$ be a gentle algebra and let $\alpha\in Q_1$. Then \sloppy
$\dim \Ext_\Lambda^1(R(\alpha),R(\alpha))\le 1$. 
In particular, if  $\pdim R(\alpha) \leq 2$ or 
if $\Lambda=\LT$ for $T$ a triangulation of a surface then
$R(\alpha)$ is rigid. 
\end{cor}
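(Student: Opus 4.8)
The plan is to read off both assertions from Proposition~\ref{prop:arrow-ext} applied with $M=R(\alpha)$, so that the string of $M$ is exactly the string $p$ of $R(\alpha)$. By that proposition, $\Ext^1_\Lambda(R(\alpha),R(\alpha))\neq 0$ precisely when there is an arrow $\beta$ with $\alpha\beta\in I$ such that $p\beta^{-1}p$ is a non-zero string, and in that case the space is spanned by the single arrow extension attached to $\beta$. Since $\Lambda$ is gentle, condition (3) in the definition guarantees \emph{at most one} arrow $\beta$ with $t(\alpha)=s(\beta)$ and $\alpha\beta\in I$; hence there is at most one candidate arrow extension, giving $\dim\Ext^1_\Lambda(R(\alpha),R(\alpha))\le 1$ at once. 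This settles the first claim.

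For the rigidity statements I would first reformulate the non-vanishing criterion combinatorially. If no arrow $\beta$ with $\alpha\beta\in I$ exists, then $R(\alpha)$ is projective by Lemma~\ref{lm:R-alpha}(1) and hence trivially rigid, so assume such a $\beta$ exists; it is then unique. Write $p$ for the maximal direct path that is the string of $R(\alpha)$ (Remark~\ref{rem:Ralpha}(1)); it starts at $t(\alpha)=s(\beta)$ with an arrow $\gamma$ satisfying $\alpha\gamma\notin I$, and I set $u=t(p)$ for its terminal vertex. Unwinding the walk- and string-conditions for the concatenation $p\beta^{-1}p$, the junctions inside the two copies of $p$ are automatic, the junction $\beta^{-1}\gamma$ only requires $\gamma\neq\beta$ (which holds since $\alpha\gamma\notin I$ while $\alpha\beta\in I$), and the sole substantive requirement is at the junction where the last arrow of $p$ meets $\beta^{-1}$: it demands $t(\beta)=u$ together with the non-backtracking condition that the last arrow of $p$ differ from $\beta$. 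Thus \emph{$R(\alpha)$ fails to be rigid exactly when the relation-arrow $\beta$ terminates at the end vertex of the maximal direct path out of $t(\alpha)$} (when $p$ is trivial this degenerates to $\beta$ being a loop). In both special cases the task is to exclude this coincidence.

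For $\Lambda=\LT$ I would invoke the remark following Proposition~\ref{prop:arrow-ext}: a non-zero self-extension forces $\alpha\in\C(\Lambda)$, so $\alpha$ lies in a full-relation $3$-cycle $\alpha=\alpha_1,\alpha_2,\alpha_3$, where necessarily $\beta=\alpha_2$. Since a triangulation quiver has no loops and its only oriented cycles are these $3$-cycles (each with full relations), I would argue that the maximal direct path $p$, which begins at $t(\alpha_1)=s(\alpha_2)$ with an arrow other than $\alpha_2$, cannot terminate at $t(\alpha_2)=s(\alpha_3)$: a direct string can traverse no length-two subpath of a full-relation $3$-cycle, so closing up to $s(\alpha_3)$ would create an oriented cycle that is not such a $3$-cycle, contradicting the surface structure. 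Hence $t(\beta)\neq u$ and $R(\alpha)$ is rigid.

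For the case $\pdim R(\alpha)\le 2$ I would track the syzygy chain of Lemma~\ref{lm:R-alpha}(2): $\Omega^1 R(\alpha)=R(\alpha_2)$ with $\beta=\alpha_2$, together with Corollary~\ref{cor:Ralpha-cyclic} and the fact that $\pdim R(\alpha)=\infty$ iff $\alpha\in\C(\Lambda)$. The intended route is to show that the terminal-vertex coincidence $t(\beta)=u$ propagates down the chain $R(\alpha_1),R(\alpha_2),\dots$ and ultimately produces a full-relation cycle, forcing $\pdim R(\alpha)=\infty$ and contradicting $\pdim R(\alpha)\le 2$. I expect \textbf{this to be the main obstacle}: controlling the terminal vertex $u$ of the maximal direct path $p$ relative to $t(\beta)$, and proving that the coincidence cannot survive under a finiteness-of-projective-dimension hypothesis without a full-relation cycle already being present. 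This is the step where the hypotheses must genuinely enter, and I would attack it by a careful case analysis of the (at most two) arrows leaving $t(\alpha)$ and the possible shapes of $p$, reconciling it with the $\LT$ argument above and, if the purely combinatorial bookkeeping becomes unwieldy, appealing to the geometric model of~\cite{BCS2018}.
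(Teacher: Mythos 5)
Your first claim and your treatment of $\Lambda=\LT$ are essentially sound, but note that for $\LT$ you take a genuinely different route from the paper. The paper never invokes the global fact that every oriented cycle of $Q(T)$ is a full-relations $3$-cycle; it argues locally: a self-extension with arrow $\beta$ forces $t(p)=t(\beta)$, and since $\beta$ lies in a cycle with full relations there is an arrow $\gamma$ with $\beta\gamma\in I$; as the last arrow of $p$ is distinct from $\beta$ (non-backtracking in $p\beta^{-1}p$), gentleness condition (3) gives $p\gamma\notin I$, contradicting the right maximality of $p$. Your cycle-closing argument buys a nice geometric picture (modulo some care when the closed walk $p\alpha_3\alpha_1$ is not a simple cycle), but the paper's local mechanism has the decisive advantage that it applies verbatim outside the triangulation case --- which is exactly where your proposal breaks down.

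The genuine gap is the $\pdim$ case, which you leave open, and the strategy you propose for it cannot be completed, because the implication you aim at is false: a self-extension of $R(\alpha)$ does not force a full-relation cycle or $\pdim R(\alpha)=\infty$. Concretely, let $Q$ have vertices $1,2,3$ and arrows $\alpha\colon 1\to 2$ and $\beta,\gamma\colon 2\to 3$, with $I=(\alpha\beta)$. This algebra is gentle, $R(\alpha)=M(\gamma)$, and $\Omega^1(R(\alpha))=R(\beta)=S(3)=P_3$, so $\pdim R(\alpha)=1$; yet $\gamma\beta^{-1}\gamma$ is a string, so by Proposition~\ref{prop:arrow-ext} there is a non-split self-extension $0\to M(\gamma)\to M(\gamma\beta^{-1}\gamma)\to M(\gamma)\to 0$. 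This example in fact shows that the corollary is false as literally stated ($\pdim R(\alpha)=1\le 2$ but $R(\alpha)$ is not rigid): the paper's own proof actually treats the hypothesis $\pdim R(\alpha)\ge 2$ (note the inequality in its final sentence), and that is the statement one can prove. With that reading, the missing step is the same two-line local argument as above: if $\pdim R(\alpha)\ge 2$, then $R(\beta)=\Omega^1(R(\alpha))$ is non-projective, so by Lemma~\ref{lm:R-alpha} there exists $\gamma$ with $\beta\gamma\in I$; combined with $t(p)=t(\beta)=s(\gamma)$ and gentleness, $p\gamma$ is a string properly extending $p$, contradicting right maximality. So your instinct that this is ``where the hypotheses must genuinely enter'' was correct, but no case analysis of the arrows at $t(\alpha)$, and no appeal to the geometric model, can rescue the $\le$ version; the fix is to correct the hypothesis, after which the paper's single local argument disposes of both the $\pdim$ case and the $\LT$ case at once.
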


\begin{proof} 
The first part is Proposition~\ref{prop:arrow-ext}.  Let $R(\alpha)=M(p)$. 
If $R(\alpha)$ has an arrow extensions with arrow $\beta$, then 
$p\beta^{-1}p$ is a string. This implies that $\alpha \beta \in I$. Now suppose that $\Lambda = \LT$. 
Then if $\alpha \notin \C(\Lambda)$, $R(\alpha)$ is projective. So suppose
that $\alpha \in \C(\Lambda)$.  Since $\alpha$ is in a full cycle of relations, there exists an arrow 
$\gamma$ such that $\beta \gamma \in I$. Since $\Lambda$ is gentle, we have $p \gamma \notin I$, 
contradicting the right maximality of $p$.  Similarly if $\Lambda$ is general and $\pdim R(\alpha) \geq 2$, 
there exists $\gamma$ such  $\beta \gamma \in I$ and $p \gamma \notin I$, contradicting the right maximality of $p$.
\end{proof}

%
\section{Higher syzygies}\label{sec:syzygies}

In this section, we determine syzygies of indecomposable modules over gentle algebras and 
deduce some consequences for higher extensions. 
Since gentle algebras are monomial and hence string algebras, it already follows from~\cite{HZ91, HZS05} that in a projective resolution, 
from the second syzygy onwards, all indecomposable summands are uniserial. 
However, we need a more precise description of the  decomposition of the syzygies into direct sums of indecomposable modules.

\begin{prop}\label{prop:syzygy}
Let $M$ be an indecomposable string module over 
a gentle algebra $\Lambda = KQ/I$. Then we have: \\
(1) 
$\Omega^1(M)=L_0\oplus \cdots \oplus L_k$ for some $k$ and where for $i=1,\dots, k-1$, 
$L_i$ is projective 
and $L_0$ and $L_k$ are uniserial or zero. 

\noindent
Furthermore, either $\Omega^i(M)=0$ for 
some $i>0$ or there exists some $n>0$ such that 
$\Omega^{ln+i}(M)=\Omega^i(M)$ for $i\ge m$ for some positive integers $m$ and for $l \geq 0$. 
 \\
(2)
$\Omega^{-1}(M)=N_0\oplus \cdots \oplus N_l$ for some $l$ and where for $i=1,\dots, l-1$, 
$N_i$ is injective
and $N_0$ and $N_l$ are uniserial or zero. 

\noindent
Furthermore, either $\Omega^{-i}(M)=0$ for 
some $i>0$ or there exists some $n>0$ such that 
$\Omega^{-ln-i}(M)=\Omega^{-i}(M)$ for $i\ge m$ for some positive integer $m$ and for $l \geq 0$. 
\end{prop}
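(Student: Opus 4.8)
The plan is to prove part~(1) by an explicit computation of the projective cover of $M=M(w)$ together with its kernel, and then to iterate, reducing the long-term behaviour to that of the modules $R(\alpha)$ already analysed in Lemma~\ref{lm:R-alpha}. First I would describe $P(M)$. Drawing the string $w$ as a walk, its top is the direct sum of the simples at the \emph{peaks} (the sources of the walk, where $w$ leaves a vertex by two distinct arrows, or leaves an endpoint going downward), so that $P(M)=\bigoplus_{v\ \text{peak}}P_v$ with $P_v$ as in Remark~\ref{rem:proj-ind}. The covering map sends the top of each $P_v$ to the corresponding generator of $M$ and realises, along each of the two maximal paths out of $v$, exactly the substring of $w$ between $v$ and the adjacent valley or endpoint.

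Next I would compute $\Omega^1(M)=\ker(P(M)\to M)$ locally. The key claim, which is the main technical step, is: \emph{(a) each interior valley $u$ of $w$ contributes a summand isomorphic to the projective $P_u$, and (b) each of the two ends of $w$ contributes a uniserial summand generated by an arrow, or zero.} For (a), at a valley $u$ two arrows $\alpha,\beta$ of $w$ enter $u$, and the two covering paths coming down from the neighbouring peaks meet there; their difference lies in the kernel, and using the gentle axioms (which force a perfect pairing of the arrows into $u$ with the arrows out of $u$, so that $\alpha$ continues along one maximal path and $\beta$ along the other) one checks that this difference generates a submodule with simple top $S(u)$ whose two arms are the maximal paths out of $u$, i.e.\ exactly $P_u$ (uniserial if only one arrow leaves $u$). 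For (b), at an endpoint the relevant covering path leaves the string module entirely, and its tail below the endpoint is a module $R(\alpha)$ for the arrow $\alpha$ continuing that path, or is zero if the path is already maximal. A dimension count confirms that these summands exhaust the kernel, giving $\Omega^1(M)=L_0\oplus L_1\oplus\cdots\oplus L_k$ with $L_1,\dots,L_{k-1}$ projective and $L_0,L_k$ uniserial (generated by an arrow) or zero. Parts of this are implicit in \cite{HZ91,HZS05,kalck}, but I would verify the precise decomposition directly.

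For the ``furthermore'' I would iterate. Since $L_1,\dots,L_{k-1}$ are projective they die under $\Omega^1$, so for $i\ge 2$ we have $\Omega^i(M)=\Omega^{i-1}(L_0)\oplus\Omega^{i-1}(L_k)$. Each of $L_0,L_k$ has the form $R(\alpha)$, and by Lemma~\ref{lm:R-alpha}(2) its syzygy is again a single module $R(\alpha_2)$ generated by an arrow, where $\alpha_1=\alpha,\alpha_2,\alpha_3,\dots$ is the chain determined by $\alpha_j\alpha_{j+1}\in I$. Hence for $i\ge 2$ the module $\Omega^i(M)$ is a direct sum of at most two modules $R(\beta)$. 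By Lemma~\ref{lm:R-alpha} each chain either terminates (when $\alpha\notin\C(\Lambda)$, i.e.\ $\pdim R(\alpha)<\infty$, so the summand becomes $0$) or, when $\alpha\in\C(\Lambda)$ lies on an $n$-cycle with full relations, becomes periodic with $\Omega^{n}(R(\alpha))=R(\alpha)$ by Corollary~\ref{cor:Ralpha-cyclic}. Choosing $m$ large enough that both chains have entered their periodic part (or vanished) and taking $n=\lcm$ of the two cycle lengths yields $\Omega^{ln+i}(M)=\Omega^i(M)$ for all $i\ge m$ and $l\ge 0$; if neither summand lies in $\C(\Lambda)$ then instead $\Omega^i(M)=0$ for $i\gg 0$.

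Part~(2) is dual: using the injective hull in place of the projective cover, $\Omega^{-1}(M)$ decomposes with injective interior summands $N_1,\dots,N_{l-1}$ and uniserial ends $N_0,N_l$ of the form $U(\alpha)$, and the same argument run through Lemma~\ref{lm:U-version} (with $\idim$ and the injective/co-syzygy analogues of the $\C(\Lambda)$-chains) gives eventual vanishing or periodicity of the co-syzygies. I expect the main obstacle to be step (a): pinning down, via the gentle axioms, that the valley contributions are precisely the projectives $P_u$ and that the kernel contains nothing else. The periodicity half is then routine bookkeeping over the finitely many $R(\alpha)$-chains.
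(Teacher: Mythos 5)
Your proposal is correct and follows essentially the same route as the paper's proof: identify $P(M)$ as the sum of projectives at the peaks of the string, show that each interior valley contributes a projective summand (via the gentle-algebra pairing of incoming and outgoing arrows at the valley, which the paper phrases as maximality of the extended strings $x_{2i+1}^{-1}w_{2i+1}$ and $w_{2i}y_{2i}$) while each end contributes $R(\alpha)$ or zero, and then iterate using Lemma~\ref{lm:R-alpha} to obtain eventual vanishing or periodicity, dualizing for part~(2). Your explicit use of the least common multiple of the two cycle lengths to get a single period $n$ is a slightly more careful bookkeeping of what the paper leaves implicit, but it is the same argument.
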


\begin{proof} 
We will prove part (1) of the proposition.  Part (2) then follows by a dual argument. 
Let $M=M(w)$ where $w=w_1 w_2 \cdots w_n$ where each $w_i$ is a direct 
or inverse string, alternatingly. 
The string $w$ of $M$ can be presented in the following way
\[
\includegraphics[width=10cm]{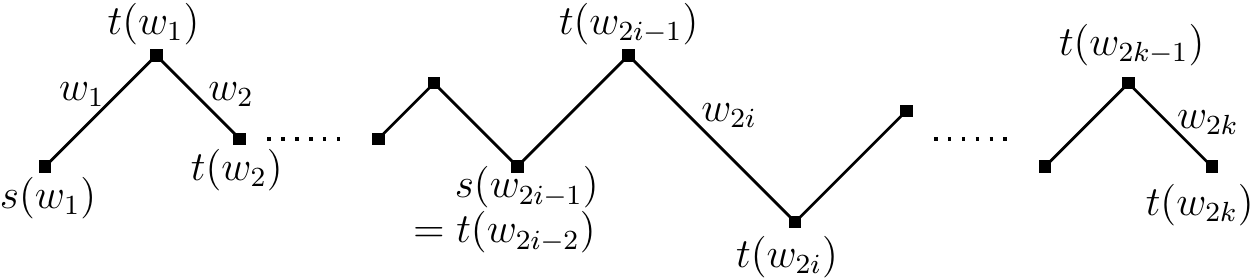}
\]
where if $w$ starts (resp. ends) with a direct (resp. inverse) string, 
$w_1$ (resp. $w_{2k}$) is trivial. 

From now on we will always allow that $w_1$ or $w_{2k}$ or both are trivial and we will only explicitly treat these 
cases separately when necessary. Note that if $w_1$ (resp.  $w_{2k}$) is trivial then 
$s(w_1) = t(w_1) = s(w_2)$ (resp. $s(w_{2k})=t(w_{2k}) = t(w_{2k-1})$). 
The top of $M$ is 
\[
\tp(M)=S(t(w_1)) \oplus S(t(w_3)) \oplus\cdots \oplus S(t(w_{2k-1}))
\]
and the socle of $M$ is
\[
\soc(M)=S(s(w_1))\oplus S( t(w_2)) \oplus S(t(w_4)) \oplus\cdots \oplus S(t(w_{2k}))
\] 
where we adopt the convention that $S(s(w_1)) =  0$ (resp. $S(t(w_{2k})) = 0$)  if $w_1$ (resp. $w_{2k}$) is trivial. 
It follows from the above that we have 
\[
P(M)=P(t(w_1))\oplus P(t(w_3))\oplus\cdots 
\oplus P(t(w_{2k-1})).
\] 
Pictorially, $P(M)$ looks as follows. 
The red strings indicate summands of the first syzygy, the white 
square boxes indicate two-dimensional vector spaces: for $i=1,\dots, k$, the simple 
$t(w_{2i})$ appears in the two projectives $P(t(w_{2i-1}))$ and $P(t(w_{2i+1}))$.  
\[
\includegraphics[width=12cm]{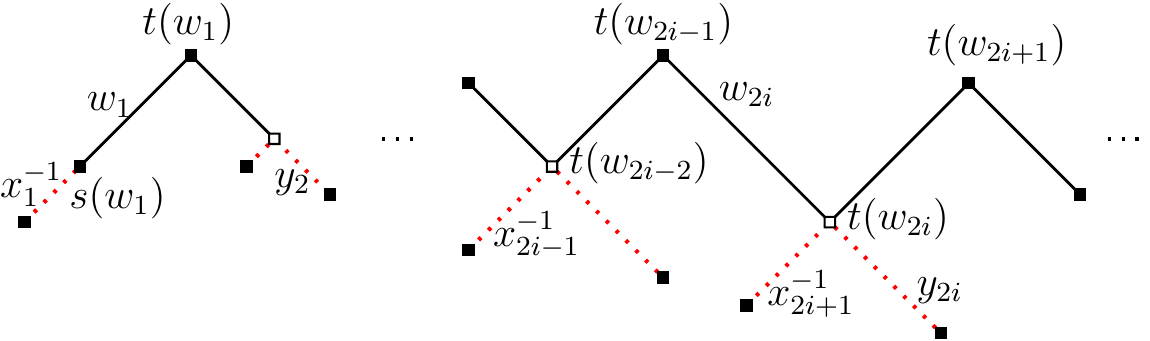}
\]
Note that $P(t(w_{2i-1}))= x^{-1}_{2i-1}w_{2i-1}w_{2i}y_{2i}$ where 
$x^{-1}_{2i+1}$ and $y_{2i}$ might be trivial strings, for $1 \le i \le (n+1)/2$ and where $x^{-1}_{2i-1}$  
(resp. $y_{2i}$) is the unique string such that $x^{-1}_{2i-1}w_{2i-1}$ (resp. $w_{2i}y_{2i}$) is a left 
maximal inverse (resp. right maximal direct) string. If  $w_1$ is trivial then 
$P(t(w_{1}))= x^{-1}_{1}w_{2}y_{2}$ and if  $w_{2k}$ is trivial $P(t(w_{2k-1}))= x^{-1}_{2k-1}w_{2k-1}y_{2k}$.

We will now determine the summands of 
$\Omega^j(M)$, for $j \ge 1$ beginning with $\Omega^1(M)$. 
The summands of $\Omega^1(M)$ arise from the socle of $M$. In (i) 
we consider the summands corresponding to the ``interior'' of the string $w$, in (ii) and (iii) the summands 
arising from the beginning and the end of $w$.

(i) We start by determining $L_i$ for  $1\le i< k$. 
Suppose first that both $x_{2i+1}^{-1}$ and $y_{2i}$ are non-trivial. 
Note that in this case, 
$x_{2i+1}^{-1}y_{2i}$ is a string and the string module $M(x_{2i+1}^{-1}y_{2i})$ is a direct summand 
of $\Omega^1(M)$ and we have $L_i = M(x_{2i+1}^{-1}y_{2i})$. 
Since $\Lambda$ is gentle, by maximality of $x_{2i+1}^{-1} w_{2i+1}$ and of $w_{2i} y_{2i}$, 
$L_i$ is the projective module 
$P(t(w_{2i}))$. \\
Suppose now that only one of $x_{2i+1}^{-1}$ and $y_{2i}$ is non-trivial. 
Suppose first that $x_{2i+1}^{-1}$ is trivial  
and  that $y_{2i} = \beta_1 \ldots \beta_m$. Then we must have 
$L_i = P(t(w_{2i})) = M(y_{2i})$. 
Similarly, if only $x_{2i+1}^{-1}$ is non-trivial then $L_i = P(t(w_{2i})) = M(x_{2i+1}^{-1})$.
If both  $x_{2i-1}$ and $y_{2i}$ are trivial then 
$L_i = P(t(w_{2i}))$ is the simple module at $t(w_{2i})$ and it is projective. 
Thus in all cases $L_i =P(t(w_{2i}))$.

(ii) We now determine $L_0$.  
If $x_1^{-1}$ is trivial 
then $L_0=0$. 
If $x_1^{-1}$ is not trivial then 
$ x_1^{-1} = (\alpha_m \ldots \alpha_1)^{-1}$ for $\alpha_1, \ldots, \alpha_m \in Q_1$. 
Then clearly $L_0=R(\alpha_1)$ which is uniserial.
Following Lemma~\ref{lm:R-alpha} (1)  
 there are two possible cases depending on whether 
$\alpha_1$ is in $\C(\Lambda)$ or not: Suppose that $\alpha_1\notin \C(\Lambda)$. Let 
$\gamma_1 \ldots \gamma_m$ be the maximal path in $KQ$ such that $\alpha_1 = \gamma_1$ 
and $\gamma_i \gamma_{i+1} \in I$. Then $L_0$ has projective dimension $m$ and 
$\Omega^i (L_0)$ is a direct summand of $\Omega^{i+1}(M)$  
for all $ 1 \leq i \leq m-1$. In particular, we have $\Omega^i(M) = 0$ for $i \geq m$.
Suppose now that  $\alpha_1\in \C(\Lambda)_n$ for some $n \geq 1$, that is, there are arrows 
$\gamma_1, \ldots, \gamma_n$ in $Q_1$ such that $\gamma_1=\alpha_1$, 
$\gamma_i \gamma_{i+1} \in I$ and $\gamma_n \gamma_1 \in I$.    Then by Lemma~\ref{lm:R-alpha}, 
$\Omega^{ln+i} R(\alpha_1) = \Omega^i R(\alpha_1)$ for $ 0 \leq i \leq n-1$ and for all $l \geq 0$ 
and $R(\alpha_1)$ is a direct summand of $\Omega^{ln+1}(M)$. More generally in this case, 
$\Omega^{nl+i} (R(\alpha_1)) = R(\alpha_i)$ is a direct summand of $\Omega^{nl+i+1} (M)$ 
for all $i \geq 1$. 

(iii) 
This is analogous to case (ii). 
\end{proof}

\begin{rem}\label{rem:L-Ralpha}
In the proof of Proposition~\ref{prop:syzygy} we give an explicit 
description of the strings defining the modules $L_i$ and $N_i$. 
In particular, keeping the notation of Proposition~\ref{prop:syzygy}, we see 
that if $L_0$ and $L_k$ are non-zero then they are of the form $R(\alpha)$ for some arrow $\alpha\in Q_1$. 
Similarly, if $N_0$ and $N_l$ are non-zero then they are of the form $U(\beta)$ for some $\beta\in Q_1$. 
\end{rem} 

Proposition~\ref{prop:syzygy} combined with Lemma~\ref{lm:R-alpha}  yields the following. 

\begin{cor}\label{cor:ext2}
Let $M,N$ be indecomposable string modules over a gentle algebra $\Lambda$. Let 
$\Omega^1(M)=L_0\oplus\cdots \oplus L_k$ be as in Proposition~\ref{prop:syzygy}. Then 
\[
\Ext_\Lambda^2(M,N)=\Ext_\Lambda^1(L_0,N)\oplus \Ext_\Lambda^1(L_k,N).
\]
More generally, for $n\ge 2$ we have 
\[
\Ext_\Lambda^n(M,N)=\Ext_\Lambda^1(\Omega^{n-2}(L_0),N)\oplus 
\Ext_\Lambda^1(\Omega^{n-2}(L_k),N) 
\]
where for $i=0,k$, the module 
$\Omega^{n-2}(L_i)$ is either 0 or of the form $R(\alpha)$ for some arrow $\alpha$. \\
In particular, if $\Lambda=\LT$, for $n\ge 2$,  
\[
\Ext_\Lambda^{n}(M,N)=\Ext_\Lambda^{n+3}(M,N). 
\]
\end{cor}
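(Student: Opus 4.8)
The plan is to prove \texttt{Corollary~\ref{cor:ext2}} in three stages, each reducing the computation of $\Ext^n$ to a single $\Ext^1$ computation for a module of the form $R(\alpha)$, and then to exploit the periodicity of such modules under the syzygy operator. First I would establish the base case $n=2$. By \texttt{Proposition~\ref{prop:syzygy}}, we have $\Omega^1(M) = L_0 \oplus L_1 \oplus \cdots \oplus L_k$ where $L_1,\dots,L_{k-1}$ are projective and $L_0, L_k$ are uniserial or zero. Using the standard dimension-shifting isomorphism
\[
\Ext_\Lambda^2(M,N) \cong \Ext_\Lambda^1(\Omega^1(M), N),
\]
together with additivity of $\Ext^1$ over direct sums and the vanishing $\Ext_\Lambda^1(P,N) = 0$ for $P$ projective, the interior summands $L_1,\dots,L_{k-1}$ all contribute zero, leaving exactly
\[
\Ext_\Lambda^2(M,N) = \Ext_\Lambda^1(L_0,N) \oplus \Ext_\Lambda^1(L_k,N).
\]

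For the general case $n \ge 2$, the plan is to iterate dimension shifting. Again by \texttt{Proposition~\ref{prop:syzygy}} the syzygies $\Omega^{j}(M)$ for $j \ge 1$ decompose with projective interior summands, and so each application of $\Omega^1$ and the corresponding shift $\Ext^{j+1}(M,N) \cong \Ext^{j}(\Omega^1(M),N)$ kills the projective parts. Tracking the two non-projective ends through the shift, I would use \texttt{Remark~\ref{rem:L-Ralpha}} to note that $L_0$ and $L_k$, when nonzero, are of the form $R(\alpha)$; by \texttt{Lemma~\ref{lm:R-alpha}(2)} each syzygy $\Omega^{j}(R(\alpha))$ is again of the form $R(\alpha')$ (or zero once the projective dimension is reached), so no new non-projective branching is introduced. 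This yields
\[
\Ext_\Lambda^n(M,N) = \Ext_\Lambda^1(\Omega^{n-2}(L_0),N) \oplus \Ext_\Lambda^1(\Omega^{n-2}(L_k),N),
\]
where each $\Omega^{n-2}(L_i)$ is either $0$ or of the form $R(\alpha)$. The point to verify carefully is that the ends stay ``separate,'' i.e. that applying $\Omega^1$ to the whole of $\Omega^{j}(M)$ is compatible with applying $\Omega^1$ to the summands $L_0$ and $L_k$ individually; this follows because syzygies respect direct sums and the interior remains projective at every stage.

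For the final periodicity statement in the case $\Lambda = \LT$, I would invoke that such algebras arise from triangulations where every oriented cycle is a $3$-cycle with full relations (recalled in Section~\ref{sec:triangul-algebra}). Hence if $L_i = R(\alpha)$ is non-projective, then $\alpha \in \C(\Lambda)$ and in fact $\alpha$ lies in a $3$-cycle with full relations, so by \texttt{Corollary~\ref{cor:Ralpha-cyclic}} we have $\Omega^3(R(\alpha)) = R(\alpha)$. Therefore $\Omega^{n-2}(L_i) = \Omega^{(n+3)-2}(L_i)$ for each end, and substituting into the general formula gives
\[
\Ext_\Lambda^{n}(M,N) = \Ext_\Lambda^{n+3}(M,N)
\]
for all $n \ge 2$.

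The main obstacle I anticipate is the bookkeeping in the general $n$ step: one must confirm that the decomposition of \texttt{Proposition~\ref{prop:syzygy}} applies not just to $\Omega^1(M)$ but consistently at every level, so that the two ends never merge, split further, or interact through shared projective summands. Once it is clear that the non-projective content is carried entirely by the two ``$R(\alpha)$-type'' ends and that each end evolves independently under $\Omega^1$ as described by \texttt{Lemma~\ref{lm:R-alpha}}, the corollary follows by straightforward iteration; the periodicity is then an immediate consequence of the $3$-cycle structure via \texttt{Corollary~\ref{cor:Ralpha-cyclic}}.
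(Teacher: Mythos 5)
Your proposal is correct and follows essentially the same route as the paper, which gives no separate argument but simply notes that the corollary follows from Proposition~\ref{prop:syzygy} combined with Lemma~\ref{lm:R-alpha}: dimension shifting past the projective interior summands, iterating $\Omega^1$ on the two $R(\alpha)$-type ends, and invoking the $3$-cycle periodicity $\Omega^3(R(\alpha))=R(\alpha)$ of Corollary~\ref{cor:Ralpha-cyclic} for $\Lambda=\LT$. The only cosmetic point is that when $L_i$ is projective and nonzero your intermediate equality $\Omega^{n-2}(L_i)=\Omega^{n+1}(L_i)$ fails at $n=2$ (one side is $L_i$, the other $0$), but both contribute zero to $\Ext^1$, so the conclusion is unaffected.
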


By Corollary~\ref{cor:ext2} to compute higher extensions between $M$ and $N$ 
indecomposable string modules, 
it is enough to determine $\Ext_\Lambda^1(R(\alpha),N)$, for certain arrows $\alpha$. 

\medskip

We can use Proposition~\ref{prop:arrow-ext} and Proposition~\ref{prop:syzygy} to analyse the pattern of the higher 
syzygies between indecomposable string modules. For this, we need the following notion: Let $M(w)$ 
be a string module, $w=w_1 \cdots w_n$. We say that $M(w)$ or the string $w$ 
{\em minimally ends in a cycle} if at least one of the following is the case: 

(i) $w_1$ is inverse, there exists $\alpha\in\C(\Lambda)$ with $s(w_1)=t(\alpha)$ and $\alpha^{-1}w$ 
is a string. 

(ii) $w_n$ is direct, there exists $\beta\in \C(\Lambda)$ with $t(w_n)=s(\beta)$ and $w\beta$ is a string.

As a summary, using Proposition~\ref{prop:arrow-ext}, we now have the following. 

\begin{theorem}\label{thm:characterisation}
Let $\Lambda = KQ/I$ be a gentle algebra and $M, N$ two indecomposable $\Lambda$-modules. Then 
$\Ext^i_\Lambda(M,N)$ is either eventually zero or becomes periodic. 

Furthermore, $\Ext^i(M,N)$ is eventually periodic of period $k$ if and only if 
$M$ and $N$ minimally end in a common cycle, 
where $k$ is the size of the corresponding cycle of full relations 
or the least common multiple of the lengths of the cycles of full of relations at both ends of the strings. 

More precisely, there exists $n_0>0$ such that for all $i\ge n_0$ and for all $l\ge 0$, 
$\Ext^{i}_\Lambda(M,N) = \Ext^{i +kl}_\Lambda(M,N)$. 
\end{theorem}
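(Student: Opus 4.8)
The plan is to reduce everything to the structural results already established, namely Corollary~\ref{cor:ext2} together with Proposition~\ref{prop:arrow-ext} and Lemma~\ref{lm:R-alpha}. By Corollary~\ref{cor:ext2}, for $n\ge 2$ we have
\[
\Ext_\Lambda^n(M,N)=\Ext_\Lambda^1(\Omega^{n-2}(L_0),N)\oplus\Ext_\Lambda^1(\Omega^{n-2}(L_k),N),
\]
where each $L_i$ (for $i=0,k$) is either zero or of the form $R(\alpha)$ for a specific arrow $\alpha$ determined by the end of the string $w$ of $M$ (by Remark~\ref{rem:L-Ralpha}, $L_0=R(\alpha_1)$ arising from an inverse initial segment and $L_k$ arising dually from a direct terminal segment). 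Thus the behaviour of $\Ext_\Lambda^i(M,N)$ for large $i$ is controlled entirely by the two sequences $\Omega^{j}(R(\alpha))$ for the arrows $\alpha$ coming from the two ends of $M$.

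First I would analyse a single end, say the contribution of $L_0=R(\alpha)$. By Lemma~\ref{lm:R-alpha}(2), the syzygies $\Omega^j(R(\alpha))$ are $R(\alpha_{j+1})$ along the unique maximal relation-sequence $\alpha=\alpha_1,\alpha_2,\dots$ with $\alpha_i\alpha_{i+1}\in I$; this sequence either terminates (when some $R(\alpha_m)$ is projective, i.e.\ no $\beta$ with $\alpha_m\beta\in I$) giving $\pdim R(\alpha)<\infty$, or it is periodic precisely when $\alpha\in\C(\Lambda)$, in which case $\alpha$ lies in a cycle of full relations of some length $n$ and $\Omega^{ln+j}(R(\alpha))=\Omega^j(R(\alpha))$ for all $l\ge0$ by Corollary~\ref{cor:Ralpha-cyclic}. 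In the terminating case the summand $\Ext_\Lambda^1(\Omega^{n-2}(L_0),N)$ vanishes for all large $n$ once $\Omega^{n-2}(L_0)=0$. So each end contributes either eventual vanishing or a genuinely periodic sequence of arrows, and periodicity from the $L_0$-end occurs exactly when $\alpha\in\C(\Lambda)$, which by the definitions is exactly the condition that $M$ \emph{minimally ends in a cycle} at that end. The analogous statement holds for $L_k$.

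Next I would combine the two ends. If neither end feeds into a cycle, both summands die eventually and $\Ext_\Lambda^i(M,N)=0$ for $i\gg0$. If at least one end feeds into a cycle, the surviving summand(s) give a periodic pattern in the arrow $\alpha$ appearing; one then invokes Proposition~\ref{prop:arrow-ext} to translate periodicity of $\Omega^{i-2}(L_0)=R(\alpha_{i-1})$ into periodicity of the $\Ext^1$-dimension against $N$. Crucially, Proposition~\ref{prop:arrow-ext} shows $\Ext_\Lambda^1(R(\alpha),N)\ne0$ only when $N$'s string admits the arrow-relation matching $\alpha$, i.e.\ when $N$ also minimally ends in the same cycle; this is where the "common cycle" hypothesis enters, and it explains why $M$ and $N$ must share the cycle for nonzero periodic extensions. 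If both ends are periodic, with cycles of lengths $k_1,k_2$, the combined period is $\lcm(k_1,k_2)$, and if only one end contributes the period is that single cycle length $k$. Choosing $n_0$ past the point where all terminating summands have vanished and past the pre-period $m$ from Proposition~\ref{prop:syzygy}, one obtains $\Ext_\Lambda^i(M,N)=\Ext_\Lambda^{i+kl}(M,N)$ for all $i\ge n_0$, $l\ge0$.

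The main obstacle I anticipate is the bookkeeping in the "if and only if": proving that eventual periodicity forces $M$ and $N$ to minimally end in a \emph{common} cycle, rather than merely having $M$ feed into some cycle. The subtlety is that even if $R(\alpha_{i-1})$ cycles through arrows periodically, the extension groups $\Ext_\Lambda^1(R(\alpha_{i-1}),N)$ could all vanish if $N$ does not match; one must check that a nonzero periodic pattern arises exactly when $N$'s string offers the requisite arrow $\beta$ with $\alpha_{i-1}\beta\in I$ along the shared cycle, for the relevant residues $i$ modulo $k$. I would handle this by reading off, via Proposition~\ref{prop:arrow-ext}, for which arrows $\alpha$ in the cycle the condition "$w_N\beta^{-1}p$ is a nonzero string" holds, and confirming that this nonvanishing occurs for at least one residue class precisely under the common-cycle hypothesis, while the period is governed by the cycle length(s) as claimed.
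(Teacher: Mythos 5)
Your proposal is correct and follows essentially the same route as the paper: the paper gives no separate proof, presenting Theorem~\ref{thm:characterisation} precisely as a summary of Corollary~\ref{cor:ext2}, Lemma~\ref{lm:R-alpha} (with Corollary~\ref{cor:Ralpha-cyclic}), and Proposition~\ref{prop:arrow-ext}, which is exactly the chain of reductions you carry out, including the key point that Proposition~\ref{prop:arrow-ext} is what forces $N$ to end in the \emph{same} cycle for the periodic pattern to be nonzero. The only case you leave implicit---as does the paper at this point---is that of band modules, which have projective dimension one (Corollary~\ref{cor:bands}) and hence fall into the eventually-zero alternative.
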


The above implies that most of the higher extensions between indecomposable string modules 
are 0 and we have: 

\begin{cor}\label{cor:dimensions}
Let $M$ and $N$ be indecomposable string modules for a gentle algebra $\Lambda$. 
Set $m_j:=\dim \Ext_{\Lambda}^j(M,N)$. Then there exists $n_0>0$ such that for all 
$j\ge n_0$, $m_j+m_{j+1}+\ldots + m_{j+k-1}\le 2$, for $k$ as in Theorem~\ref{thm:characterisation}, 
and this number is constant. 

If $\Lambda=\Lambda(T)$, then $n_0=2$ and $k=3$. 
\end{cor}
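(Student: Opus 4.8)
The plan is to reduce everything to the two non-projective syzygy summands isolated in Proposition~\ref{prop:syzygy} and then count one-dimensional pieces coming from Proposition~\ref{prop:arrow-ext}. Writing $\Omega^1(M)=L_0\oplus\cdots\oplus L_k$ as in Proposition~\ref{prop:syzygy}, Corollary~\ref{cor:ext2} gives, for $n\ge 2$,
\[
\Ext_\Lambda^n(M,N)=\Ext_\Lambda^1(\Omega^{n-2}(L_0),N)\oplus\Ext_\Lambda^1(\Omega^{n-2}(L_k),N),
\]
where by Remark~\ref{rem:L-Ralpha} each $\Omega^{n-2}(L_i)$ is either $0$ or of the form $R(\alpha)$. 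By Proposition~\ref{prop:arrow-ext} each summand on the right is at most one-dimensional, so
\[
m_n=\varepsilon_0(n)+\varepsilon_k(n),\qquad \varepsilon_i(n):=\dim\Ext_\Lambda^1(\Omega^{n-2}(L_i),N)\in\{0,1\}.
\]
This already shows $m_n\le 2$ for each individual $n\ge 2$; the content of the statement is how the nonzero pieces are distributed over a window.

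Next I would analyse $\varepsilon_0$ and $\varepsilon_k$ separately using Lemma~\ref{lm:R-alpha} and Corollary~\ref{cor:Ralpha-cyclic}. If $L_i=R(\alpha)$ with $\alpha\notin\C(\Lambda)$ then $\Omega^{n-2}(L_i)=0$ for all large $n$, so $\varepsilon_i$ is eventually $0$; if $\alpha\in\C(\Lambda)_{c_i}$ then $\Omega^{n-2}(L_i)$ runs cyclically through $R(\alpha_1),\dots,R(\alpha_{c_i})$, so $\varepsilon_i$ is periodic of period $c_i$ from some point on. This is exactly the dichotomy of Theorem~\ref{thm:characterisation}: the sequence $(m_n)$ is eventually zero unless $M$ and $N$ minimally end in a common cycle, in which case it is eventually periodic with $k$ the relevant cycle length (or $\lcm(c_0,c_k)$ if both tails are periodic). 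Choosing $n_0$ beyond the finite pre-periodic part, the window sum $m_j+\cdots+m_{j+k-1}$ becomes independent of $j\ge n_0$, which gives the ``constant'' assertion immediately.

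For the bound $\le 2$ I would return to Proposition~\ref{prop:arrow-ext}: a nonzero class in $\Ext_\Lambda^1(R(\alpha),N)$ is the unique arrow extension gluing the direct string of $R(\alpha)$ to an end of the string of $N$ via the arrow $\beta$ with $\alpha\beta\in I$. Since $\Lambda$ is gentle, each of the (at most two) ends of $N$ determines at most one such $\beta$, hence at most one $\alpha$; so $N$ can receive a gluing at each of its ends at most once per full turn of the cycle. Matching the syzygy tail $L_0$ and the tail $L_k$ of $M$ to the two ends of $N$ and summing $\varepsilon_0+\varepsilon_k$ over a window of length $k$ — one full common period — should then produce at most one contribution per end of $N$, i.e.\ at most $2$. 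The main obstacle is precisely this bookkeeping: one must verify that over the common period $k=\lcm(c_0,c_k)$ the two periodic streams meet the two ends of $N$ a combined total of at most twice, rather than the naive $k/c_0+k/c_k$ times. The delicate case is when \emph{both} tails of $M$ feed into cycles that $N$ also meets; here one has to pin down which end of $N$ each tail of $M$ can actually glue to, using the uniqueness of the continuing arrow in a gentle algebra, to rule out overcounting.

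Finally, for $\Lambda=\LT$ the situation is rigid: every oriented cycle is a $3$-cycle with full relations, so each $c_i=3$ and hence $k=3$. Moreover $R(\alpha)$ with $\alpha$ in a $3$-cycle is immediately periodic, $\Omega^3 R(\alpha)=R(\alpha)$ by Corollary~\ref{cor:Ralpha-cyclic}, so there is no pre-periodic part beyond passing from $M$ to $\Omega^1(M)$; concretely Corollary~\ref{cor:ext2} gives $\Ext_\Lambda^{n}(M,N)=\Ext_\Lambda^{n+3}(M,N)$ for all $n\ge 2$, so $n_0=2$ and $k=3$. Since here $c_0=c_k=3=k$, the window $\{j,j+1,j+2\}$ is exactly one period for each stream, and the end-matching argument of the previous paragraph specializes to the clean bound $m_j+m_{j+1}+m_{j+2}\le 2$ for all $j\ge 2$.
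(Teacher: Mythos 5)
Your first two paragraphs are correct and follow the paper's intended derivation: by Corollary~\ref{cor:ext2} and Proposition~\ref{prop:arrow-ext}, for $n\ge 2$ one has $m_n=\varepsilon_0(n)+\varepsilon_k(n)$ with $\varepsilon_i(n)\in\{0,1\}$, each stream $\Omega^{n-2}(L_i)$ is eventually zero or runs periodically through the modules $R(\alpha_j)$ of a cycle of full relations (Lemma~\ref{lm:R-alpha}, Corollary~\ref{cor:Ralpha-cyclic}), so the window sums are eventually constant, and for $\LT$ Corollary~\ref{cor:ext2} gives $n_0=2$, $k=3$. But the actual content of the statement, the inequality $m_j+\cdots+m_{j+k-1}\le 2$, is precisely what you do not prove: you correctly note that each end of the string $w$ of $N$ admits at most one gluing arrow $\beta$ (hence at most one $\alpha$ with $\alpha\beta\in I$), and then assert that the window sum ``should'' pick up at most one contribution per end of $N$, explicitly deferring the bookkeeping. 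That is a genuine gap, and the uniqueness you invoke cannot close it: a single syzygy stream traverses \emph{every} $R(\alpha_j)$ of its cycle once per period, so one stream alone can collect one contribution from \emph{each} of the two ends of $N$, i.e.\ two per period; gentleness bounds gluings per end of $N$, not per cycle.

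In fact, in the ``delicate case'' you flag the bound fails, so no bookkeeping can complete your plan. Let $\Lambda=KQ/I$ be the gentle algebra with arrows $\alpha\colon 1\to 2$, $\beta\colon 2\to 3$, $\delta\colon 3\to 1$, $v\colon 5\to 1$, $y\colon 5\to 3$ and $I=(\alpha\beta,\beta\delta,\delta\alpha)$, so that $v\alpha,y\delta\notin I$; its relations are exactly those of a full oriented $3$-cycle, and (up to the orientation convention) it arises from a triangulation of an annulus with three marked points on one boundary component and one on the other. Take $M=N=M(v^{-1}y)$, the string module with top $S_5$ and socle $S_1\oplus S_3$. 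Its projective cover is $P_5$ (basis $e_5,v,y,v\alpha,y\delta$), and the kernel of $P_5\to N$ is spanned by $v\alpha$ and $y\delta$, so $\Omega^1(N)=R(\alpha)\oplus R(\delta)=S_2\oplus S_1$: \emph{both} tails feed the same $3$-cycle. By Proposition~\ref{prop:arrow-ext}, $\Ext^1_\Lambda(R(\alpha),N)\cong K$ (string $v^{-1}y\beta^{-1}$) and $\Ext^1_\Lambda(R(\beta),N)\cong K$ (string $y^{-1}v\delta^{-1}$), while $\Ext^1_\Lambda(R(\delta),N)=0$: the two ends of $N$ glue to two \emph{different} arrows of the cycle. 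Since both streams run cyclically through $S_2,S_3,S_1$, Corollary~\ref{cor:ext2} yields $(m_2,m_3,m_4,m_5,m_6,\dots)=(1,2,1,1,2,\dots)$, so every window of length $k=3$ sums to $4>2$ (the window sum is still constant, as it must be). This is also consistent with Theorems~\ref{thm:dim-ext2}--\ref{thm:ext4}: all four endpoint pairs of the two arcs lie in one internal triangle, and each pair contributes one basis element. So the sharp bound your method (and the paper's) actually gives is one contribution per pair of ends of $M$ and $N$, i.e.\ $4$, and $4$ is attained; the inequality $\le 2$ — including your final claim that it holds for $\LT$ with $c_0=c_k=3$ — requires additional hypotheses excluding exactly the configuration you singled out.
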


The arguments in the proof of Proposition~\ref{prop:syzygy} applied to band modules give the following 
well-known result: 

\begin{cor}\label{cor:bands}
Let $\Lambda$ be a gentle algebra. 
Let $w$ be a band and $B(w)$ the associated quasi-simple band module over 
$\Lambda$. 
Then $pd(B(w)) = id(B(w))= 1$. 
\end{cor}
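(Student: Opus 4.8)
The plan is to run the syzygy analysis of Proposition~\ref{prop:syzygy} directly on the band module $B(w)$ and to observe that, because a band is a \emph{cyclic} string, the only syzygy summands that could fail to be projective, namely the end terms $L_0$ and $L_k$ of that proposition, simply do not occur. Write the band (up to cyclic rotation) as $w = w_1 w_2 \cdots w_{2k}$ with the $w_i$ alternately inverse and direct, read cyclically so that indices are taken modulo $2k$. The band condition, namely that $w^m$ is a string for all $m$, is exactly what guarantees that there is no left or right end and that the local configuration at every socle vertex is the interior overlap of two projectives.

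First I would record the projective cover. As in the proof of Proposition~\ref{prop:syzygy}, the top of $B(w)$ is the sum $\bigoplus_{i=1}^{k} S(t(w_{2i-1}))$ of the simples at the peaks of the cyclic walk, so that $P(B(w)) = \bigoplus_{i=1}^{k} P(t(w_{2i-1}))$; the eigenvalue attached to the closing of the band affects only the scalars in the covering map, not the isomorphism type of its kernel. Next I would identify the first syzygy: the socle of $B(w)$ sits at the $k$ valleys $t(w_2), t(w_4), \dots, t(w_{2k})$, and each valley $t(w_{2i})$ is flanked inside $P(t(w_{2i-1}))$ and $P(t(w_{2i+1}))$ by the maximal strings $w_{2i} y_{2i}$ and $x_{2i+1}^{-1} w_{2i+1}$. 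This is precisely case~(i) of the proof of Proposition~\ref{prop:syzygy}, so by gentleness and maximality the corresponding summand of $\Omega^1(B(w))$ is the projective $P(t(w_{2i}))$. Since the cyclic structure leaves no end pieces, I conclude that $\Omega^1(B(w)) = \bigoplus_{i=1}^{k} P(t(w_{2i}))$ is projective, whence $\pdim B(w) \le 1$. The indecomposable projectives are string modules (Remark~\ref{rem:proj-ind}), whereas $B(w)$ is a band module; hence $B(w)$ is not projective and $\pdim B(w) = 1$. The injective-dimension claim then follows from the dual computation, running part~(2) of Proposition~\ref{prop:syzygy} on $B(w)$: every cosyzygy summand is an interior term $N_i$, hence injective, so $\Omega^{-1}(B(w))$ is injective and $\idim B(w) = 1$.

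The step I expect to be the main obstacle is the claim that the closing of the cycle genuinely produces no non-projective end summand, i.e. that \emph{every} syzygy summand falls under case~(i) and never under the end cases~(ii)--(iii). I would justify this by using the band hypothesis carefully: because $w^2$ is again a string, both neighbours of each peak and of each valley extend maximally inside the covering projectives in exactly the interior pattern, so there is no vertex $s(w_1)$ or $t(w_{2k})$ playing the role of a free end. A secondary point to verify is that the eigenvalue does not alter the kernel: it selects a particular one-dimensional subspace at each valley, but the resulting submodule is still isomorphic to the valley projective, as one already sees in the Kronecker band $w = b^{-1} a$, where $\Omega^1(B(w)) \cong P(t(w_2))$ independently of the eigenvalue.
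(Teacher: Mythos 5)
Your proof is correct and takes essentially the same route as the paper: the paper's own argument is exactly that the analysis of $\Omega^1$ and $\Omega^{-1}$ in the proof of Proposition~\ref{prop:syzygy} applies, and that a band has no beginning or end, so every syzygy and cosyzygy summand falls under the interior case~(i) and is projective (respectively injective). Your extra checks---that the eigenvalue only affects the scalars of the covering map and not the isomorphism type of the kernel, and that $B(w)$ is not itself projective since indecomposable projectives are string modules---are details the paper leaves implicit, but they do not change the approach.
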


\begin{proof}
This follows directly from the analysis of $\Omega^1$ and $\Omega^{-1}$ in 
the proof of Proposition~\ref{prop:syzygy} together with the fact that in a band $w$ 
there is no beginning or end. 
\end{proof}

%
\section{Geometric interpretations of higher extensions} \label{sec:geometric}


In this section, we give geometric interpretations of the vanishing or periodicity of higher extensions, using surface models 
for gentle algebras. 
For gentle algebras arising from triangulations of surfaces using the geometric model in \cite{ABCP}, the dimension of $\Ext^1$ between two indecomposable 
string modules is bounded from above by the number of intersections between the two associated curves  \cite{CS17}. 
For the general case, using the explicit description of extensions in~\cite{CPS17} (see also~\cite{BDMTY17}) 
and the geometric model in~\cite{BCS2018}, the analogous result holds. 

For the higher extension spaces, we can use the results from Section~\ref{sec:syzygies} to give a geometric 
characterisation of higher extension spaces. Since it is possible to give a concise description of  all higher extensions in the 
case of gentle algebras arising from triangulations of surfaces we start with this case.

%
\subsection{Triangulations}\label{sec:triangulation} 
%

We begin by giving an explicit interpretation of bases of the higher extensions in the case of triangulations. 
For the remainder of Section~\ref{sec:triangulation}, 
$\Lambda=\LT$ is a gentle algebra arising from a triangulation $T$ of a surface. 
Higher non-trivial syzygies only arise when the two indecomposable string 
modules minimally end in at least one common 
3-cycle, see Theorem~\ref{thm:characterisation}. 
In terms of triangulations of surfaces, this means that the 
arcs corresponding to indecomposable modules end in 
at least one common internal triangle, see also Remark~\ref{rem:geometric-way} for the general case. So 
in order to 
find bases for higher extensions between indecomposable string $\Lambda$-modules $M=M(v)$ and $N=M(w)$, 
we have to study the relative 
positions of the two ends of the arcs $\gamma_v$ and $\gamma_w$ in a common internal triangle. 
This will provide us with descriptions of 
$\Ext^i(M,N)$ for $i\ge 2$. 
By Proposition~\ref{prop:syzygy}, if 
$M(v)$ is an indecomposable string module we have 
$\Omega^1(M(v))=L_0\oplus\dots\oplus L_k$. Recall that if $L_0$ (or $L_k$ respectively) 
is non-zero, then there exists an arrow $\alpha$ 
such that $L_0=R(\alpha)$ and similarly for $L_k$. 
If $R(\alpha)$ is non-projective, then $\alpha\in \C(\Lambda)$ and therefore, we have 
a set-up as in Figure~\ref{fig:configuration} where $p$ is the string of $R(\alpha)$, that is $R(\alpha) = M(p)$. 
We will use the notation as in Figure~\ref{fig:configuration} for the rest of this  subsection. 

\begin{figure}[H]
\[
\includegraphics[width=7cm]{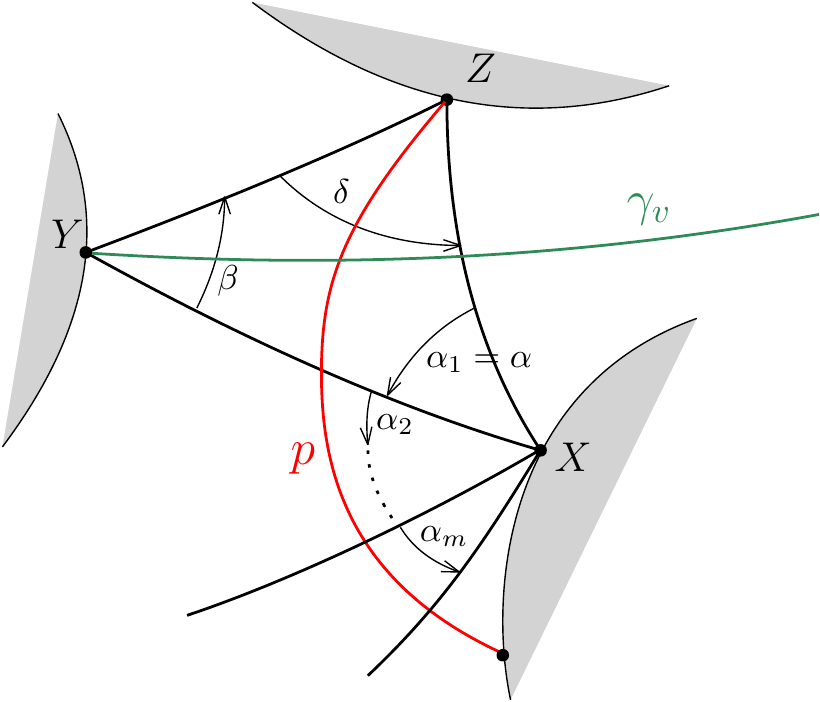}
\]
\caption{Arc $\gamma_v$ with $R(\alpha) = M(p)$ non-projective.}\label{fig:configuration}
\end{figure}

\begin{theorem}\label{thm:dim-ext2}
Let $\Lambda=\LT = KQ/I$ and let $M(v)$ and $M(w)$ be indecomposable string modules. 
Denote the corresponding arcs by $\gamma_v$ and $\gamma_w$. Then 
\[
\dim\Ext_\Lambda^2(M(v),M(w))=\left\{
\begin{array}{cl}
2 & \mbox{if $\gamma_v,\gamma_w$ are as in Case 1}\\
1 & \mbox{if $\gamma_v,\gamma_w$ are as in Case 2}\\ 
0 & \mbox{otherwise} 
\end{array}
\right.
\]
Case 1: 
At both ends   $\gamma_v$ follows $\gamma_w$  in the orientation of the surface in 
an internal triangle, see Figure~\ref{fig:v-follows-w}. \\
Case 2:  $\gamma_v$ follows $\gamma_w$ in the orientation of the surface only at one of the two ends of the arcs. \\
In case 1, the two arrow extensions given by $\beta$ and $\beta'$ as in 
Figure~\ref{fig:v-follows-w} induce a basis 
for $\Ext_\Lambda^2(M(v),M(w))$. In case 2, the basis is induced by only one of the arrow 
extensions. 

Furthermore, $\Ext^{2+3k}(M(v),M(w))\cong \Ext^2(M(v), M(w))$ for all $k\ge 0$.
\end{theorem}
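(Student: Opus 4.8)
The plan is to reduce the computation of $\Ext_\Lambda^2(M(v),M(w))$ to two applications of Proposition~\ref{prop:arrow-ext} via Corollary~\ref{cor:ext2}, and then to translate the resulting combinatorial condition on strings into the stated geometric configuration in an internal triangle. First I would write $\Omega^1(M(v)) = L_0 \oplus \cdots \oplus L_k$ as in Proposition~\ref{prop:syzygy}, so that by Corollary~\ref{cor:ext2}
\[
\Ext_\Lambda^2(M(v),M(w)) = \Ext_\Lambda^1(L_0,M(w)) \oplus \Ext_\Lambda^1(L_k,M(w)),
\]
the middle summands $L_1,\dots,L_{k-1}$ being projective and hence contributing nothing. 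By Remark~\ref{rem:L-Ralpha}, each of $L_0$ and $L_k$ is either zero or of the form $R(\alpha)$ for an arrow $\alpha \in Q_1$ determined by the corresponding end of the string $v$.

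Next I would analyse the two summands separately using Proposition~\ref{prop:arrow-ext}. Since $\Lambda = \LT$, the remark following that proposition shows that $\Ext_\Lambda^1(R(\alpha),M(w)) \neq 0$ forces $\alpha \in \C(\Lambda)$, so the relevant end of $\gamma_v$ must terminate in an internal triangle and $R(\alpha)$ is non-projective, giving the configuration of Figure~\ref{fig:configuration} with $p$ the string of $R(\alpha)$. Proposition~\ref{prop:arrow-ext} then tells us that such a summand is nonzero precisely when there is an arrow $\beta$ with $\alpha\beta \in I$ such that $w\beta^{-1}p$ is a string, in which case it is exactly one-dimensional and is spanned by the arrow extension attached to $\beta$. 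Thus each summand contributes $0$ or $1$, and since Corollary~\ref{cor:ext2} expresses $\Ext^2$ as a \emph{direct sum} of the two, the dimensions simply add and the total is at most $2$.

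The heart of the argument, which I expect to be the main obstacle, is the geometric translation of the condition ``$\alpha \in \C(\Lambda)$ and there exists $\beta$ with $\alpha\beta \in I$ and $w\beta^{-1}p$ a string'' into the statement that $\gamma_v$ follows $\gamma_w$ in the orientation of the surface at that end. Here I would exploit the dictionary of \cite{ABCP, Labardini}: an arrow $\alpha$ lying in a $3$-cycle with full relations corresponds to a pair of sides of an internal triangle, the relation $\alpha\beta \in I$ records that $\beta$ is the next side of the same triangle in the orientation, and admissibility of $w\beta^{-1}p$ encodes exactly that the end of $\gamma_w$ sits in this triangle in the position immediately preceding $\gamma_v$ in the cyclic orientation. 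Drawing the local picture inside the internal triangle, I would verify that this is precisely the ``$\gamma_v$ follows $\gamma_w$'' configuration of Figure~\ref{fig:v-follows-w}, and that the two arrows $\beta$ and $\beta'$ attached to the two ends of $\gamma_v$ are distinct and give independent contributions. Matching the three cases is then bookkeeping: both ends in the following position give dimension $2$ (Case~1), one end gives dimension $1$ (Case~2), and no end gives $0$.

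Finally, the periodicity statement $\Ext^{2+3k}(M(v),M(w)) \cong \Ext^2(M(v),M(w))$ for all $k \geq 0$ is immediate from the last assertion of Corollary~\ref{cor:ext2}, namely $\Ext_\Lambda^n(M,N) = \Ext_\Lambda^{n+3}(M,N)$ for $n \geq 2$ when $\Lambda = \LT$; this in turn rests on the fact that every cycle of full relations in $Q(T)$ is a $3$-cycle together with Corollary~\ref{cor:Ralpha-cyclic}.
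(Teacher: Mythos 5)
Your proposal is correct and follows essentially the same route as the paper: decompose $\Ext_\Lambda^2(M(v),M(w))$ via Corollary~\ref{cor:ext2}, identify the non-projective end summands as $R(\alpha)$, $R(\alpha')$, apply Proposition~\ref{prop:arrow-ext} (with the remark forcing $\alpha\in\C(\Lambda)$, hence an internal triangle) to see that each end contributes at most a one-dimensional arrow extension, and read off the ``$\gamma_v$ follows $\gamma_w$'' configuration of Figure~\ref{fig:v-follows-w}, with periodicity coming from the last statement of Corollary~\ref{cor:ext2}. Your write-up is in fact somewhat more explicit than the paper's proof about the string-to-surface dictionary and about the independence of the two summands, but it is the same argument.
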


\begin{proof}
By Corollary~\ref{cor:ext2}, 
$\Ext_\Lambda^2(M(v),M(w))=$ 
$\Ext_\Lambda^1(L_0,M(w)) \oplus \Ext_\Lambda^1(L_k,M(w))$. 
If $L_0$ respectively $L_k$ are non-zero, 
then $L_0=R(\alpha)$ and 
$L_k=R(\alpha')$ respectively, for some arrows $\alpha,\alpha'$ in $Q_1$.  

Suppose $\dim\Ext_\Lambda^2(M(v),M(w))\ne 0$. 
Then at least one of $R(\alpha)$ and $R(\alpha')$ is non-zero and 
non-projective. 
Suppose w.l.o.g. that $R(\alpha)$ is non-zero non-projective and furthermore, let $p$ 
be the right maximal string such that $R(\alpha) = M(p)$. 

Thus we are in the set-up of Figure~\ref{fig:configuration}. 
If $\gamma_w$ locally is as in Figure~\ref{fig:v-follows-w}, 
$w\beta^{-1}p$ is a string and therefore, 
by Proposition~\ref{prop:arrow-ext}, there is 
an arrow extension with arrow $\beta$. 
Thus $\dim\Ext_\Lambda^1(R(\alpha),M(w))=1$ and the result follows. 
\end{proof}

\begin{figure}[H]
\[
\includegraphics[scale=.8]{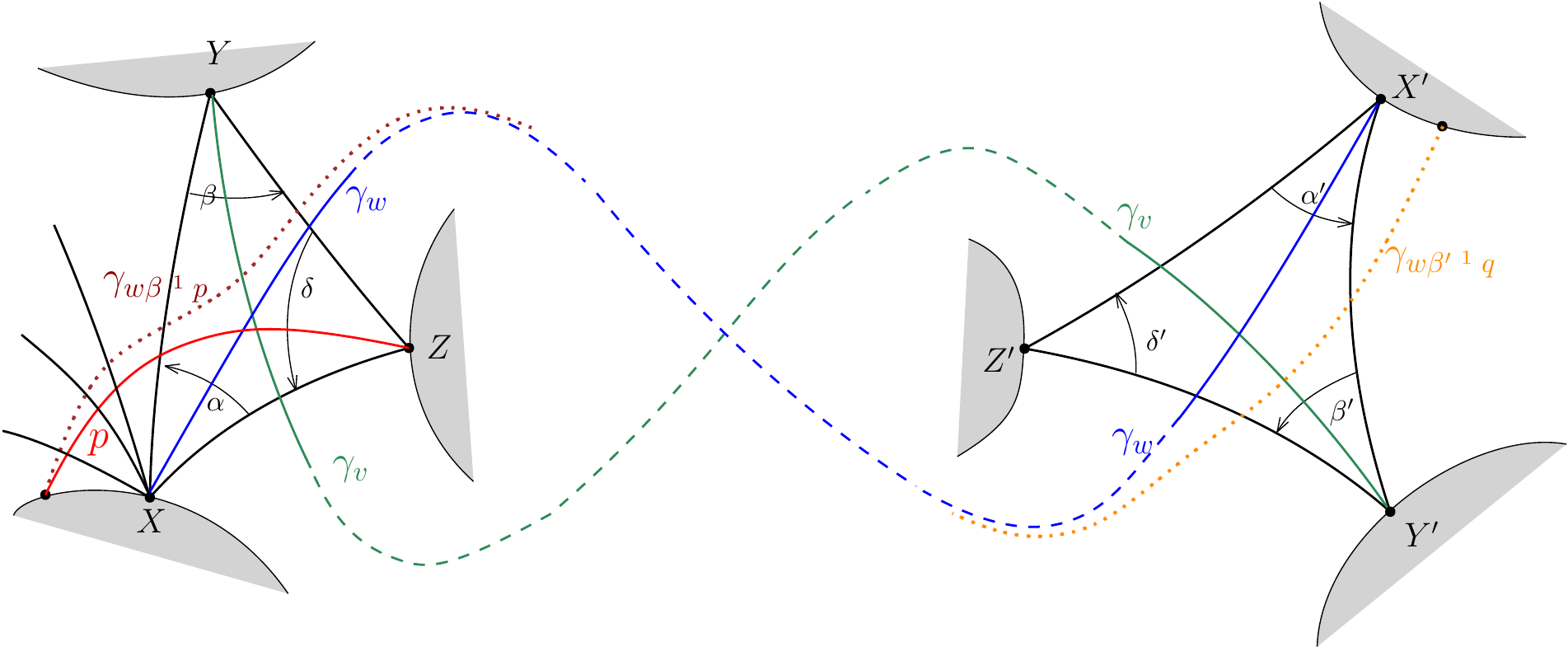}
\]
\caption{Local endpoint configurations of $\gamma_v$ and $\gamma_w$ 
contributing to $\Ext_\Lambda^2(M(v),M(w))$. Note that the 
vertices $\{X,Y,Z,X',Y',Z'\}$ are not necessarily distinct.} 
\label{fig:v-follows-w}
\end{figure}

\begin{rem}
We can describe a basis of $\Ext_\Lambda^2(M(v),M(w)) = 
\Ext_\Lambda^1(L_0,M(w)) \oplus \Ext_\Lambda^1(L_k,M(w))$ 
in terms of arcs.\\
The basis element corresponding 
to $\Ext_\Lambda^1(L_0,M(w))$ is given by the string $w\beta^{-1}p$ 
where $p$ is the string of $R(\alpha)$. 
Its arc $\gamma_{w\beta^{-1}p}$ starts at the anti clockwise neighbour of 
$X$ and follows $\gamma_w$ to end at $X'$. See left side of Figure~\ref{fig:v-follows-w}.  
Note that if $X$ is the only marked point on the boundary then $\gamma_{w\beta^{-1}p}$ is 
corresponds to the image of $\gamma_w$ under a Dehn twist. \\
The basis element corresponding to $\Ext_\Lambda^1(L_k,M(w))$ is given by the 
string $w\beta'^{-1} q$ where 
$q$ is the string of $R(\alpha')$. 
Its arc $\gamma_{w\beta'^{-1} q}$ starts at the anti clockwise neighbour of 
$X'$ (as for $X$ above this  might  be $X'$ itself) 
and follows $\gamma_w$ to end at $X$. See right side of Figure~\ref{fig:v-follows-w}.  
\end{rem}

\begin{theorem}\label{thm:ext3}
Let $\Lambda=\LT = KQ/I$ and let $M(v)$ and $M(w)$ be indecomposable string modules. 
Denote the corresponding arcs by $\gamma_v$ and $\gamma_w$. Then 
\[
\dim\Ext_\Lambda^3(M(v),M(w))=\left\{
\begin{array}{cl}
2 & \mbox{if $\gamma_v,\gamma_w$ are in Case 1}\\
1 & \mbox{if $\gamma_v,\gamma_w$ are in Case 2}\\ 
0 & \mbox{otherwise} 
\end{array}
\right.
\]
Case 1: both ends of $\gamma_v$ and $\gamma_w$ start at 
the same vertex in an internal triangle, cf.~Figure~\ref{fig:ext3}. \\
Case 2: $\gamma_v$ and $\gamma_w$ start at the same endpoint only at one side. \\
In case 1, the two arrow extensions given by $\delta$ and $\delta'$, as in 
Figure~\ref{fig:ext3},
induce a basis 
for $\Ext_\Lambda^3(M(v),M(w))$. In case 2, the basis is induced by only one of them. 

Furthermore, $\Ext^{3+3k}(M(v),M(w))\cong \Ext^3(M(v), M(w))$ for all $k\ge 0$.
\end{theorem}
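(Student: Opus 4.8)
The plan is to reduce the statement about $\Ext^3$ to the already-established $\Ext^2$ computation from Theorem~\ref{thm:dim-ext2}, exploiting the syzygy machinery of Corollary~\ref{cor:ext2}. Since $\Lambda=\LT$ is the gentle algebra of a triangulation, every non-projective summand $R(\alpha)$ of a first syzygy has $\alpha\in\C(\Lambda)=\C(\Lambda)_3$, so $\alpha$ lies in a $3$-cycle with full relations. By Corollary~\ref{cor:ext2}, $\Ext^3_\Lambda(M(v),M(w))=\Ext^1_\Lambda(\Omega^1(L_0),M(w))\oplus\Ext^1_\Lambda(\Omega^1(L_k),M(w))$, where each $L_i$ is either zero or of the form $R(\alpha)$. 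The core of the argument is therefore to understand how applying $\Omega^1$ to $R(\alpha)$ shifts the relevant endpoint configuration by one step around the $3$-cycle, and to translate this into the geometry of the internal triangle.

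First I would unwind the relationship between $\Omega^1(R(\alpha))$ and $R(\alpha)$. By Corollary~\ref{cor:Ralpha-cyclic}, since $\alpha$ lies in a $3$-cycle $\alpha=\alpha_1,\alpha_2,\alpha_3$ with full relations, we have $\Omega^1(R(\alpha_1))=R(\alpha_2)$ (this is exactly the content of the short exact sequences in Lemma~\ref{lm:R-alpha}(2)), and $\Omega^3(R(\alpha))=R(\alpha)$. Thus computing $\Ext^3$ amounts to computing $\Ext^1_\Lambda(R(\alpha_2),M(w))$ in place of the $\Ext^1_\Lambda(R(\alpha_1),M(w))$ that governed $\Ext^2$. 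Using Proposition~\ref{prop:arrow-ext}, each such $\Ext^1$ is at most one-dimensional and is non-zero precisely when there is an arrow $\delta$ with $\alpha_2\delta\in I$ and $w\delta^{-1}p'$ a non-zero string, where $p'$ is the string of $R(\alpha_2)$. The key geometric step is to identify this condition with the condition in the theorem statement: passing from $\alpha_1$ to $\alpha_2$ rotates one corner of the internal triangle, and the condition ``$\gamma_v$ follows $\gamma_w$ at an end'' (governing $\Ext^2$) becomes the condition ``$\gamma_v$ and $\gamma_w$ start at the same vertex in the internal triangle'' (governing $\Ext^3$). I would verify this by tracking the arrow $\delta$ in Figure~\ref{fig:ext3} against the arrow $\beta$ in Figure~\ref{fig:v-follows-w} through the $3$-cycle.

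With this translation in hand, the dimension count is immediate: each of the two ends (corresponding to $L_0$ and $L_k$) contributes a summand of dimension $0$ or $1$, giving total dimension $2$ in Case~1 (both ends satisfy the shared-vertex condition), dimension $1$ in Case~2 (only one end), and $0$ otherwise. The basis statement follows from Proposition~\ref{prop:arrow-ext}, which identifies each non-zero summand with a single arrow extension, namely the ones given by $\delta$ and $\delta'$. The periodicity $\Ext^{3+3k}(M(v),M(w))\cong\Ext^3(M(v),M(w))$ is a direct consequence of the final assertion of Corollary~\ref{cor:ext2}, that $\Ext^n_\Lambda(M,N)=\Ext^{n+3}_\Lambda(M,N)$ for $n\ge2$ when $\Lambda=\LT$, which itself rests on $\Omega^3(R(\alpha))=R(\alpha)$.

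I expect the main obstacle to be the geometric bookkeeping in the key translation step: precisely verifying that one application of $\Omega^1$, i.e. advancing from $\alpha_1$ to $\alpha_2$ in the $3$-cycle, moves the relevant endpoint of the dual arc by exactly one position in the internal triangle, so that the ``$\gamma_v$ follows $\gamma_w$'' configuration of Theorem~\ref{thm:dim-ext2} is converted into the ``common starting vertex'' configuration of the present theorem. This requires a careful local analysis of how the string $p$ of $R(\alpha)$ and its successor string $p'$ of $R(\alpha_2)$ sit relative to $\gamma_w$, and how the arrows $\beta,\beta'$ are replaced by $\delta,\delta'$ under the rotation around the internal triangle. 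Once this local correspondence is pinned down, the proof proper is as short as the given proof of Theorem~\ref{thm:dim-ext2}, simply substituting $\Omega^1(L_i)$ for $L_i$ throughout.
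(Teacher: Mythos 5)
Your proposal is correct and follows essentially the same route as the paper's own proof: both reduce via Corollary~\ref{cor:ext2} to $\Ext^1_\Lambda(\Omega^1(L_0),M(w))\oplus\Ext^1_\Lambda(\Omega^1(L_k),M(w))$, identify $\Omega^1(R(\alpha))$ as $R(\beta)$ for the next arrow in the $3$-cycle of full relations (Lemma~\ref{lm:R-alpha}), and then apply Proposition~\ref{prop:arrow-ext} to translate non-vanishing into the shared-endpoint configuration of Figure~\ref{fig:ext3}, with periodicity coming from Corollary~\ref{cor:ext2}. The geometric bookkeeping you flag as the main obstacle is exactly the step the paper also dispatches by appeal to the figure, so your plan matches the published argument in both structure and level of detail.
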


\begin{figure}[H]
\[
\includegraphics[scale=.8]{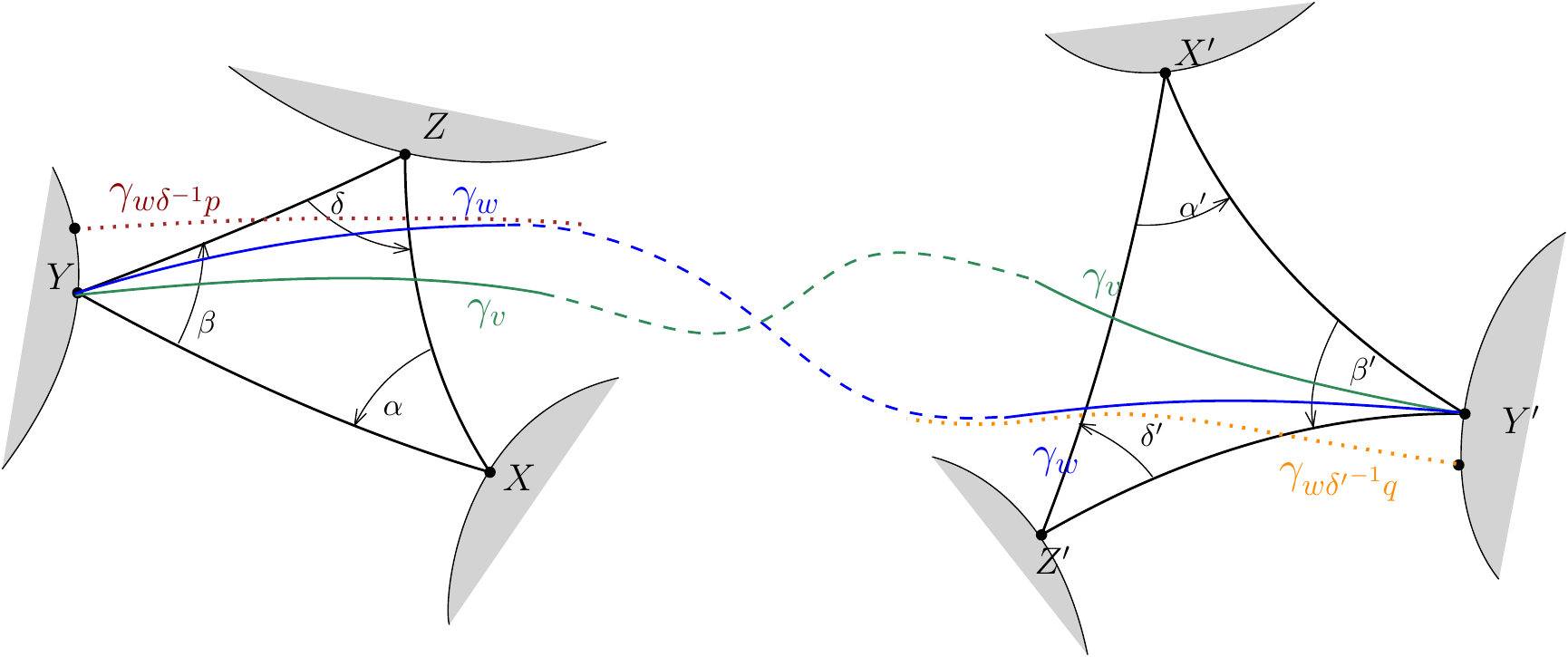}
\]
\caption{Local endpoint configurations of $\gamma_v$ and $\gamma_w$ 
contributing to $\Ext_\Lambda^3(M(v),M(w))$.} 
\label{fig:ext3}
\end{figure}

\begin{proof}
By Corollary~\ref{cor:ext2}, 
$\Ext_\Lambda^3(M(v),M(w))$
$=\Ext_\Lambda^1(\Omega^1(L_0),M(w))\oplus \Ext_\Lambda^1(\Omega^1(L_k),M(w))$. 
Recall that if $L_0$ and $L_k$ respectively are non-zero, then there exist arrows 
$\alpha$, $\alpha'$ such that $L_0=R(\alpha)$, $L_k=R(\alpha')$.

Suppose that $\Ext_\Lambda^3(M(v),M(w))\ne 0$, then at least one of $R(\alpha)$, 
$R(\alpha')$ is not projective. 
W.l.o.g.  let $R(\alpha)$ be non-projective. Then there exist $\beta$, $\delta$ such that 
$\alpha,\beta,\delta$ forms a triangle full of relations. 
By Lemma~\ref{lm:R-alpha} (3), $\Omega^1(R(\alpha))=R(\beta)$. 
Then by Proposition~\ref{prop:arrow-ext} $\Ext_\Lambda^1(\Omega^1(R(\alpha)),M(w))\ne 0$ 
if and only if $\gamma_v$ and 
$\gamma_w$ start at $X$, see Figure~\ref{fig:ext3} 
\end{proof}

\begin{rem}
We can describe a basis of $\Ext_\Lambda^3(M(v),M(w))$ in terms of arcs. \\
The basis element corresponding 
to $\Ext_\Lambda^1(\Omega^1(L_0,M(w)))$ is given by the string $w\delta^{-1}p$ 
where $p$ is the string of $R(\beta)$. 
Its arc $\gamma_{w\delta^{-1}p}$ starts at the anti clockwise neighbour of 
$Y$ and follows $\gamma_w$ to end at $Y'$. See left side of Figure~\ref{fig:ext3}. \\
The basis element corresponding to $\Ext_\Lambda^1(\Omega^1(L_k),M(w))$ 
is given by the 
string $w\delta'^{-1} q$ where 
$q$ is the string of $R(\beta')$. 
Its arc $\gamma_{w\delta'^{-1} q}$ starts at the anti clockwise neighbour of 
$Y'$ and follows $\gamma_w$ to end at $Y$. See right side of Figure~\ref{fig:ext3}. 
\end{rem}

\begin{theorem}\label{thm:ext4}
Let $\Lambda=\LT = KQ/I$ and let $M(v)$ and $M(w)$ be indecomposable string modules. 
Denote the corresponding arcs by $\gamma_v$ and $\gamma_w$. Then 
\[
\dim\Ext_\Lambda^4(M(v),M(w))=\left\{
\begin{array}{cl}
2 & \mbox{if $\gamma_v,\gamma_w$  are in Case 1}\\
1 & \mbox{if $\gamma_v,\gamma_w$  are in Case 2}\\ 
0 & \mbox{otherwise} 
\end{array}
\right.
\]
Case 1: at both ends, in the orientation of the surface, $\gamma_w$ 
follows $\gamma_v$ in an internal triangle, see Figure~\ref{fig:w-follows-v}. \\
Case 2: $\gamma_w$ 
follows $\gamma_v$ only at one of the two ends of the arcs. 

In case 1, the two arrow extensions given by $\alpha$ and $\alpha'$ as in 
Figure~\ref{fig:w-follows-v}
induce a basis 
for $\Ext_\Lambda^4(M(v),M(w))$. In case 2, the basis is induced by only  one of them. 

Furthermore, $\Ext^{4+3k}(M(v),M(w))\cong \Ext^4(M(v), M(w))$ for all $k\ge 0$.
\end{theorem}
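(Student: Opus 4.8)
The strategy is to reduce the computation of $\Ext^4$ to an $\Ext^1$ computation via the syzygy machinery of Corollary~\ref{cor:ext2}, exactly as was done for $\Ext^2$ and $\Ext^3$ in Theorems~\ref{thm:dim-ext2} and~\ref{thm:ext3}. By Corollary~\ref{cor:ext2}, we have
\[
\Ext_\Lambda^4(M(v),M(w))=\Ext_\Lambda^1(\Omega^2(L_0),M(w))\oplus\Ext_\Lambda^1(\Omega^2(L_k),M(w)),
\]
where $\Omega^1(M(v))=L_0\oplus\cdots\oplus L_k$ and each non-zero end term is of the form $L_0=R(\alpha)$, $L_k=R(\alpha')$. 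The task is therefore to identify $\Omega^2(R(\alpha))$ and then apply Proposition~\ref{prop:arrow-ext} to determine when the resulting $\Ext^1$ is non-zero, reading off the geometric condition on $\gamma_v,\gamma_w$.

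\textbf{Key steps.} First I would dispose of the trivial cases: if $R(\alpha)$ is projective (equivalently $\alpha\notin\C(\Lambda)$, since $\Lambda=\LT$) then $\Omega^2(R(\alpha))=0$ and that end contributes nothing. So assume $R(\alpha)$ is non-projective, hence $\alpha\in\C(\Lambda)$ lies in a $3$-cycle with full relations $\alpha,\beta,\delta$ (every oriented cycle in $\LT$ is a $3$-cycle). By Lemma~\ref{lm:R-alpha}(2)/(3), $\Omega^1(R(\alpha))=R(\beta)$ and then $\Omega^2(R(\alpha))=\Omega^1(R(\beta))=R(\delta)$, so that $\Omega^2(L_0)=R(\delta)$ and similarly $\Omega^2(L_k)=R(\delta')$ for the $3$-cycle $\alpha',\beta',\delta'$ at the other end. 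Next, applying Proposition~\ref{prop:arrow-ext} to $R(\delta)$: writing $p$ for the string of $R(\delta)$ and noting that the arrow completing the cycle with $\delta$ is $\alpha$ (since $\delta\alpha\in I$), the condition $\Ext_\Lambda^1(R(\delta),M(w))\neq 0$ becomes the requirement that $w\alpha^{-1}p$ be a non-zero string, which is one-dimensional when it holds. The final step is to translate this string-combinatorial condition into the geometric statement of Case~1/Case~2: the arrow $\alpha$ being the one that closes the cycle back to the original arc means geometrically that $\gamma_w$ follows $\gamma_v$ in the orientation of the surface at that end of the triangle, as depicted in Figure~\ref{fig:w-follows-v}. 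Counting: each end where this configuration occurs contributes a one-dimensional summand, giving dimension $2$ in Case~1 (both ends), $1$ in Case~2 (one end), and $0$ otherwise. The final periodicity assertion $\Ext^{4+3k}\cong\Ext^4$ is immediate from the last display of Corollary~\ref{cor:ext2}, namely $\Ext_\Lambda^n(M,N)=\Ext_\Lambda^{n+3}(M,N)$ for $n\ge 2$.

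\textbf{Main obstacle.} The homological reduction is entirely routine given the earlier results; the delicate point is the geometric identification. In $\Ext^2$ the relevant arrow was $\beta$ ("$\gamma_v$ follows $\gamma_w$"), in $\Ext^3$ it was $\delta$ (common starting vertex), and here after traversing two steps around the $3$-cycle the relevant arrow has rotated to $\alpha$ itself, which should manifest as the \emph{reversed} incidence "$\gamma_w$ follows $\gamma_v$." I expect the main work to be verifying carefully that after two syzygies we land precisely on the third arrow of the triangle and that its associated arrow extension corresponds to exactly this reversed local configuration at the internal triangle — in particular checking the orientation convention and confirming that the endpoints match the picture in Figure~\ref{fig:w-follows-v}, together with the bookkeeping that ensures no spurious overlap extensions arise (this last being already ruled out by the overlap argument in the proof of Proposition~\ref{prop:arrow-ext}).
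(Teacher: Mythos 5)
Your proposal is correct and follows essentially the same route as the paper: the paper's own proof of Theorem~\ref{thm:ext4} consists precisely of the decomposition $\Ext_\Lambda^4(M(v),M(w))=\Ext_\Lambda^1(\Omega^2(L_0),M(w))\oplus\Ext_\Lambda^1(\Omega^2(L_k),M(w))$ from Corollary~\ref{cor:ext2} together with an appeal to the arguments of Theorems~\ref{thm:dim-ext2} and~\ref{thm:ext3}, which is exactly what you have spelled out (identifying $\Omega^2(R(\alpha))=R(\delta)$ around the $3$-cycle and applying Proposition~\ref{prop:arrow-ext} with the closing arrow $\alpha$). Your fleshed-out version, including the rotation of the relevant arrow $\beta\to\delta\to\alpha$ across degrees $2,3,4$ and the resulting reversed incidence ``$\gamma_w$ follows $\gamma_v$,'' matches the paper's intended argument.
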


\begin{figure}[H]
\[
\includegraphics[scale=.8]{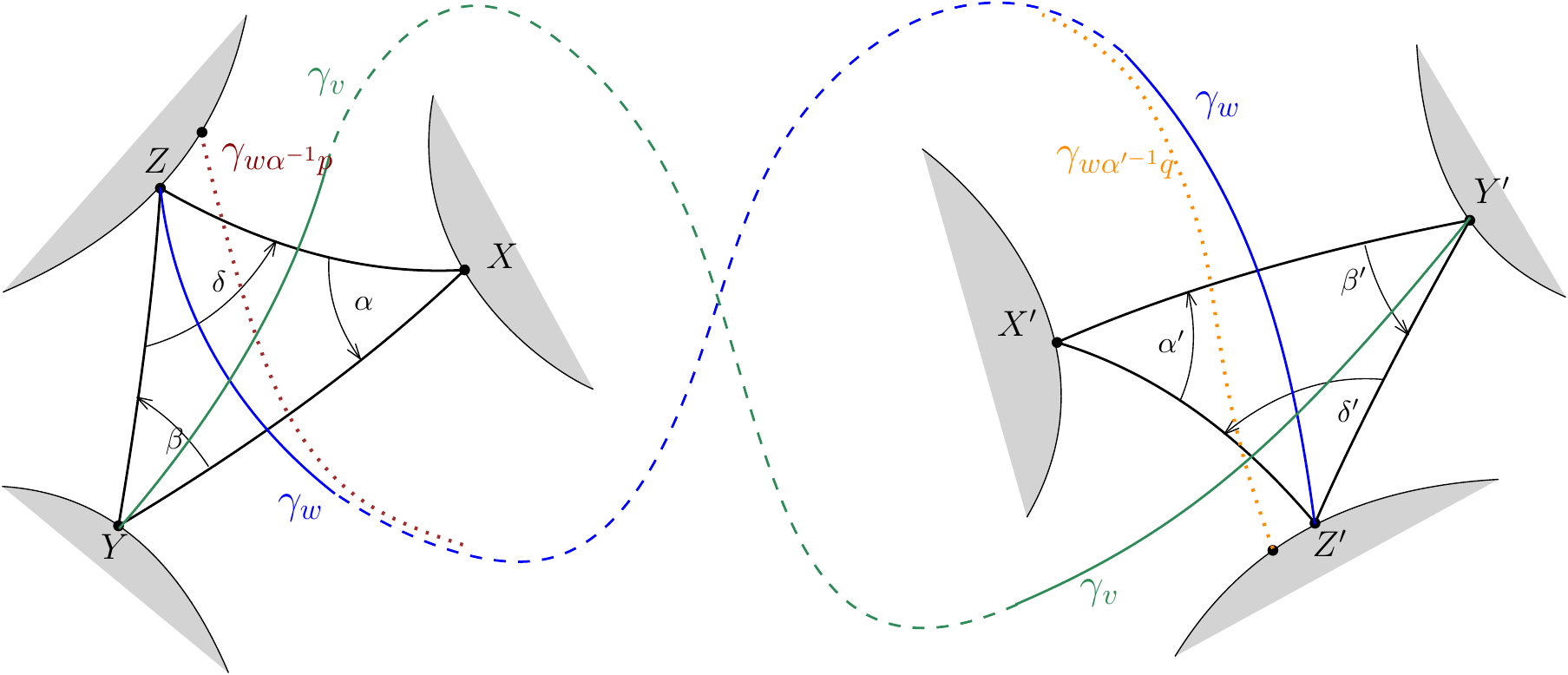}
\]
\caption{Local endpoint configurations of $\gamma_v$ and $\gamma_w$ 
contributing to $\Ext_\Lambda^4(M(v),M(w))$.} 
\label{fig:w-follows-v}
\end{figure}

\begin{proof}
The claim follows with a similar argument as in the proofs of 
Theorem~\ref{thm:dim-ext2} and \ref{thm:ext4}, using  
\[
\begin{array}{lcl} 
\Ext_\Lambda^4(M(v),M(w)) 
& = & \Ext_\Lambda^1(\Omega^2(L_0),M(w)) \oplus 
    \Ext_\Lambda^1(\Omega^2(L_k),M(w)) \\ 
 \\ 
\end{array}
\]
\end{proof}

\begin{rem}
We can describe a basis of $\Ext_\Lambda^4(M(v),M(w))$ in terms of arcs. \\
The basis element corresponding 
to $\Ext_\Lambda^1(\Omega^2(L_0,M(w)))$ is given by the string $w\alpha^{-1}p$ 
where $p$ is the string of $R(\delta)$. 
Its arc $\gamma_{w\alpha^{-1}p}$ starts at the anti clockwise neighbour of 
$Z$ and follows $\gamma_w$ to end at $Z'$. See left side of Figure~\ref{fig:w-follows-v}. \\
The basis element corresponding to $\Ext_\Lambda^1(\Omega^2(L_k),M(w))$ 
is given by the 
string $w\alpha'^{-1} q$ where 
$q$ is the string of $R(\delta')$. 
Its arc $\gamma_{w\alpha'^{-1} q}$ starts at the anti clockwise neighbour of 
$Z'$ and follows $\gamma_w$ to end at $Z$. See right side of Figure~\ref{fig:w-follows-v}. 
\end{rem}

We end this section on gentle algebras arising from triangulations with an example. 
\begin{ex}
Consider the following triangulation of an annulus, given by the black arcs labelled $1,2,\dots, 14$ 
in the figure. 

\begin{figure}[H]
\[
\includegraphics[width=9cm]{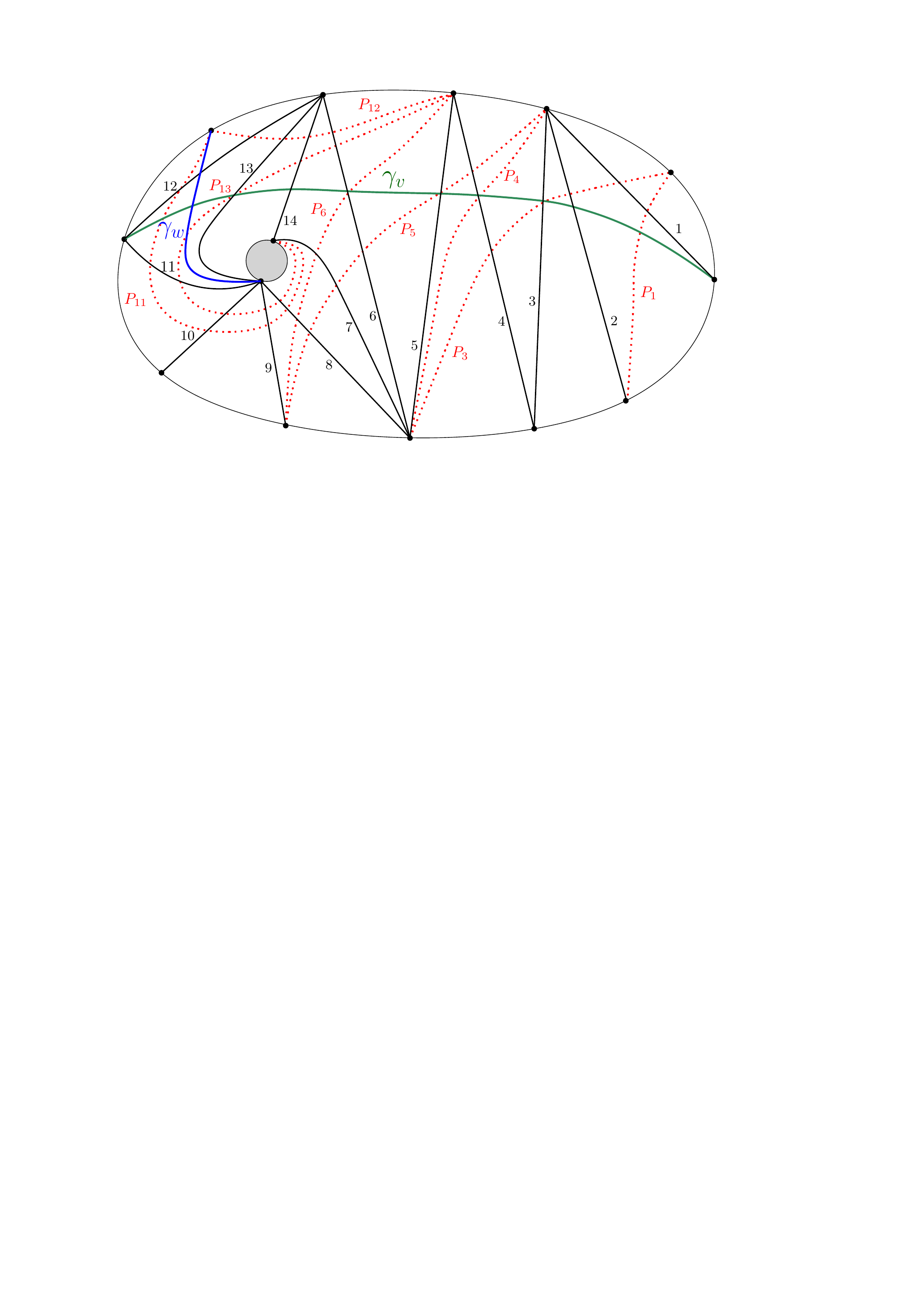}
\]
\caption{A triangulation of an annulus.}\label{fig:triangulation}
\end{figure}

Its algebra is given by the quiver $Q=Q(T)$ 
\[
\includegraphics[width=8cm]{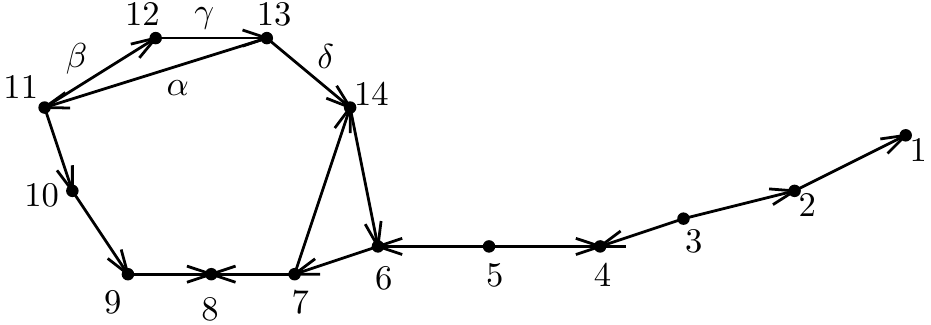}
\]
where the composition of any two arrows in a triangle is zero. 
We compute the projective resolutions for the indecomposable string modules $M=M(v)$ 
and $N=M(w)$ 
corresponding to the two arcs $\gamma_v$ and $\gamma_w$. We use this to determine 
the higher extensions between and give the corresponding arcs. 
Take the indecomposable modules 
$
M=$
\begin{tiny}
$
\xymatrix@C=-.04em@R=-.04em{
13 \\
& 14 & & 5 & &3 \\
 && 6 && 4 && 2 
}
$
\end{tiny} and 
$N=12$ 
corresponding to the arcs $\gamma_w$ and $\gamma_v$ in the figure. 
Note that $\gamma_w$ and $\gamma_v$ both have one 
endpoint in an internal triangle and one endpoint in a triangle 
with one boundary edge.
We first compute the projective resolution of $M$. 
The arcs corresponding to the projective indecomposables 
appearing in the resolution are drawn in red (dotted) in Figure~\ref{fig:triangulation}. They form a zig-zag path 
in the surface. 
\[
\xymatrix@C=.1em@R=.4em{
\cdots \ar[rr]\ar[rdd] &&  P_{11}\ar[rr]\ar[rdd] && P_{13}\ar[rr]\ar[rdd] && P_{12} \ar[rr]\ar[rdd]  && 
  P_{11}\oplus P_6\oplus P_4\oplus P_1\ar[rr]\ar[rdd] && P_{13}\oplus P_5\oplus P_3\ar[rr] && M \\ 
  & & & && && && \\
 & R(\beta)\ar[ruu]&& R(\alpha) \ar[ruu] && R(\gamma)\ar[ruu] && R(\beta)\ar[ruu] && \Omega^1(M)\ar[ruu] 
}
\]

We have $\Omega^1(M) = L_0 \oplus L_1 \oplus L_2 \oplus L_3$ where  
$L_0=R(\alpha)$, $L_1 = P_6, L_2 = P_4, L_3 = P_1$. 
Furthermore, $\Omega^2(M) = \Omega^1(L_0)= R(\beta)=12$ 
and $\Omega^3(M) = \Omega^2(L_0) 
= R(\gamma)$, 
cf.~Lemma~\ref{lm:R-alpha} and Proposition~\ref{prop:syzygy}. 
The projective resolution of $N$ is 
\[
\xymatrix@C=.1em@R=.4em{
\cdots\ar[rdd]\ar[rr]  \ar[rr] &&  P_{13}\ar[rr]\ar[rdd] && P_{12}\ar[rr]\ar[rdd]
 && P_{11}\ar[rr]\ar[rdd]  &&  P_{13}\ar[rr]\ar[rdd] &&  P_{12}\ar[rr] && N \\ 
  & & & && && && \\
 &  R(\gamma) \ar[ruu] && R(\gamma) \ar[ruu] && R(\beta)\ar[ruu] &&R(\alpha)\ar[ruu]
  && \Omega^1(N)\ar[ruu] 
}
\]
where $\Omega^1(N)=R(\gamma)$.

Using this, we determine the higher extensions between $M$ and $N$. It is straightforward to see that 
$\Ext^1(M,N)=\Ext^1(N,M)=0$ and we note that the arcs $\gamma_v$ and $\gamma_w$ do not intersect. 
We also have 
$\Ext^2(M,N)=\Ext^1(\Omega^1(M),N)=0$ 
and $\Ext^2(N,M)=\Ext^1(\Omega^1(N),M)=0$. The second extensions of $N$ by $M$ 
is non-zero: 
$\Ext^2(M,N)=\Ext^1(\Omega^2(M),N)=\Ext^2(R(\alpha),12)\cong K$ where the non-trivial middle term of the 
short exact sequence is 
$M=$
\begin{tiny}
$
\xymatrix@C=-.04em@R=-.04em{
& 11 \\
12 && 10 \\  & & & 9 \\  && & &8
}
$
\end{tiny}. 
$\Ext^3(N,M)=\Ext^3(M,N)=0$ and $\Ext^4(N,M)\cong K$ with middle term of the 
non-trivial extension given by 
$\begin{tiny}
\xymatrix@C=-1em@R=-.04em{12 \\
&M}\end{tiny}
$, while $\Ext^4(M,N)=0$. 
This yields 
\[
\begin{array}{lcl}
\dim\Ext^i(N,M)_{i>0} & =  & (0,0,0,1,0,0,1,0,\dots) 
\\
 & \\
\dim\Ext^i(M,N)_{i>0} & = & (0,1,0,0,1,0,\dots).
\end{array}
\] 
To obtain the projective indecomposables appearing in degree 2 and above in the projective resolution of 
a module ending in an internal triangle, such as $M$, 
we only need to rotate the arcs forming this internal triangle to obtain the arcs 
corresponding to the indecomposables $P(L_0)$, $P(\Omega^1(L_0))$ and $P(\Omega^2(L_0))$ 
where $L$ is a direct summand of the first syzygy of $M$. 
\[
\includegraphics[width=4.8cm]{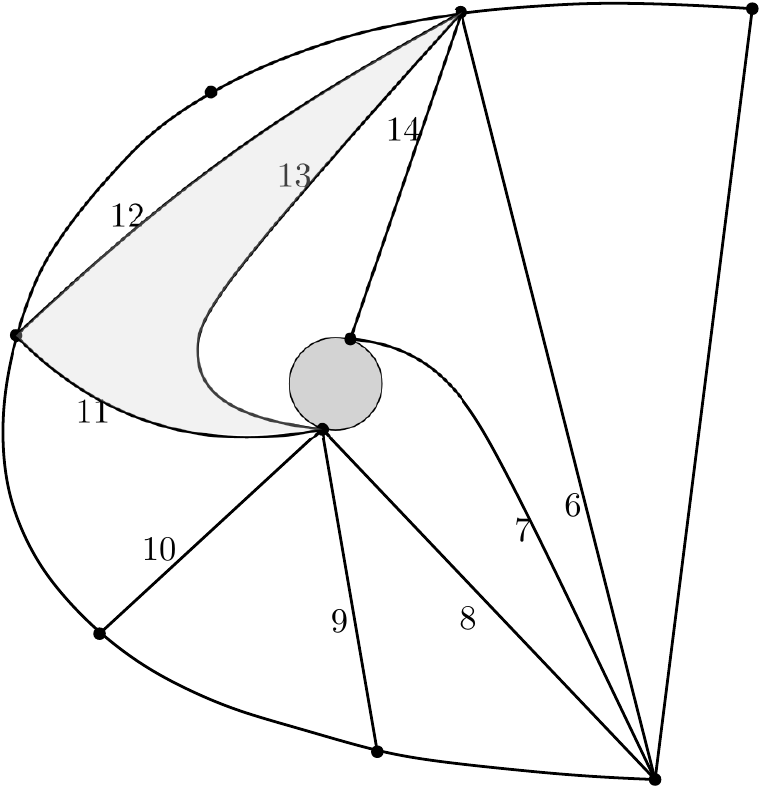}
\hskip1.5cm
\includegraphics[width=4.8cm]{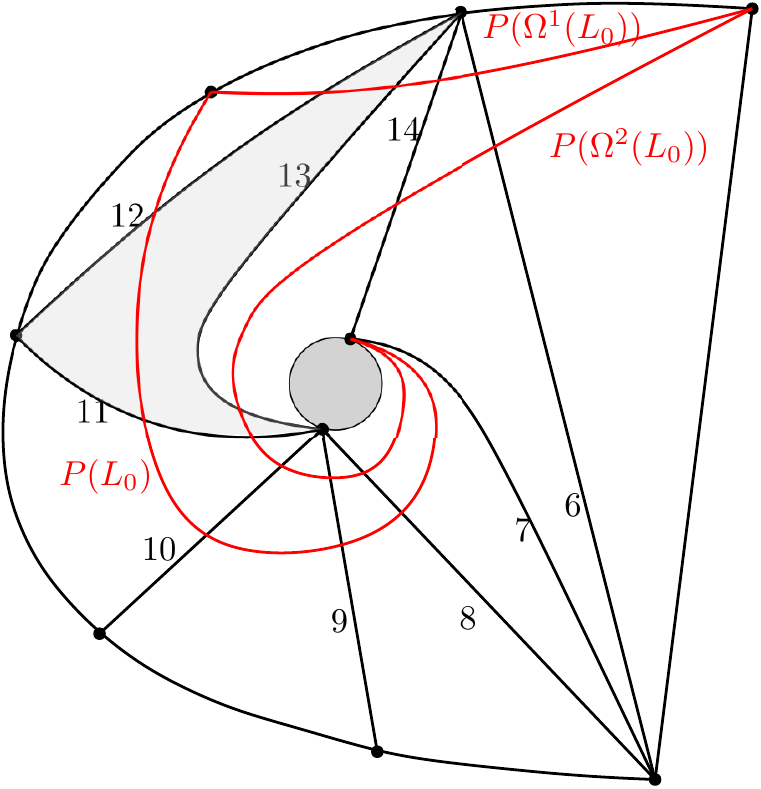}
\]
In other words, 
the projective covers of $L_0$, $\Omega^1(L_0)$ and of $\Omega^2(L_0)$ 
form a triangle. 

\end{ex}

%
\subsection{Geometric interpretation of higher extensions for general gentle algebras}

Similarly as in the surface triangulation case, using the results of Section~\ref{sec:syzygies} and 
of the geometric description of the module category of a gentle algebra in~\cite{BCS2018}, 
a geometric description of higher extensions between 
indecomposable string modules for general gentle algebras can be given as follows.

\begin{rem}\label{rem:geometric-way}
Let $\Lambda=KQ/I$ be gentle, let $M=M(v)$ and $N=M(w)$ be indecomposable string modules 
with associated arcs $\gamma_v$ and $\gamma_w$. Let $P_0$ and $P_k$ be the tiles in which the 
arc $\gamma_v$ starts and ends and $P'_0$ and $P'_{k'}$ be the tiles in which $\gamma_w$ starts and ends.
Then we have: \\
(1) There exists $n_0>0$ such that $\Ext^i(M,N)=0$ for all $i\ge n_0$ if and only if the arcs $\gamma_v$ and 
$\gamma_w$ start or end in no common internal tile. 

\noindent
(2) There exist infinitely many $i>0$ with $\Ext^i(M,N)\ne 0$ if $\{P_0,P_k\}\cap \{P_0',P_{k'}'\}\neq \emptyset$ 
and at least one of the common tiles is internal. 
In this case, the sequence $(\dim \Ext^i(M,N))_i$ 
eventually becomes periodic with period equal to the size of this common tile or of the least common multiple of 
the two common tiles. 
\end{rem}

To finish this section, we consider two examples of gentle algebras, one with infinite and one with finite 
global dimension. 

\begin{ex}
Let $\Lambda$ be the gentle algebra given by the quiver below, with a 4-cycle full of relations. 
By~\cite{BCS2018} (see also \cite{CSP}), $\Lambda$ can be given by a dissection of a polygon as shown 
on the right. 
We consider (higher) extensions between the two indecomposable string modules 
$N=M(w)=\begin{tiny}\xymatrix@C=-.04em@R=-.04em{2\\ 3}\end{tiny}$  
and $M=M(v)=\begin{tiny}\xymatrix@C=-.04em@R=-.04em{4 \\  5}\end{tiny}$. 
\[
\includegraphics[height=3.5cm]{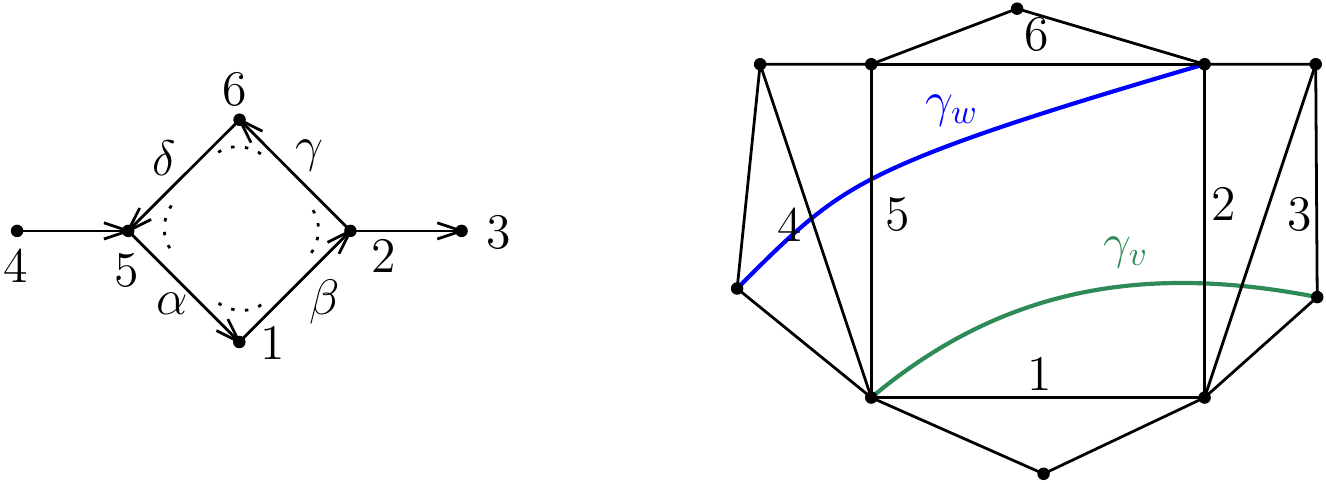}
\]
The projective resolution of $N$ is 
\[
\xymatrix@C=.1em@R=.4em{
\cdots \ar[rr]\ar[rdd]&& P_6\ar[rr]\ar[rdd] &&  P_{2}\ar[rr]\ar[rdd] && P_1\ar[rr]\ar[rdd] && P_5 \ar[rr]\ar[rdd]  && 
  P_6\ar[rr]\ar[rdd] && P_2\ar[rr] && N \\ 
  & & & && && && \\
 & R(\delta)\ar[ruu] && R(\gamma)\ar[ruu]&& R(\beta) \ar[ruu] && R(\alpha)\ar[ruu] 
  && R(\delta)\ar[ruu] && \Omega(N) \ar[ruu] 
}
\]
with $\Omega(N)=R(\gamma)=6$. 
The one of $M$ is: 
\[
\xymatrix@C=.1em@R=.4em{
\cdots \ar[rr]\ar[rdd]&& P_1\ar[rr]\ar[rdd] &&  P_{5}\ar[rr]\ar[rdd] && P_6\ar[rr]\ar[rdd] && P_2 \ar[rr]\ar[rdd]  && 
  P_1\ar[rr]\ar[rdd] && P_4\ar[rr] && M \\ 
  & & & && && && \\
 & R(\beta)\ar[ruu] && R(\alpha)\ar[ruu]&& R(\delta) \ar[ruu] && R(\gamma)\ar[ruu] 
  && R(\beta)\ar[ruu] && \Omega(M) \ar[ruu] 
}
\]
where $\Omega(M)$ is the simple at 1, $R(\beta)=$
\begin{tiny}
$\xymatrix@C=-.04em@R=-.04em{
2 \\
3}
$
\end{tiny}, 
$R(\gamma)=6$, $R(\delta)=5$, $R(\alpha)=1$. 
One obtains
$\Ext^1(N,M)=0$, 
$\Ext^2(N,M)=\Ext^1(\Omega(N),M)\cong K$, $\Ext^3(N,M)=\Ext^1(R(\delta),M)=0$, 
$\Ext^4(N,M)=\Ext^1(R(\alpha),M)=0$, $\Ext^5(N,M)=\Ext^1(R(\beta),M)=0$, so for 
the dimensions, we have 
$\dim\Ext^i(N,M)_{i>0}=(0,1,0,0,0,1,0,0,0,\dots)$. 
\[
\includegraphics[width=4.5cm]{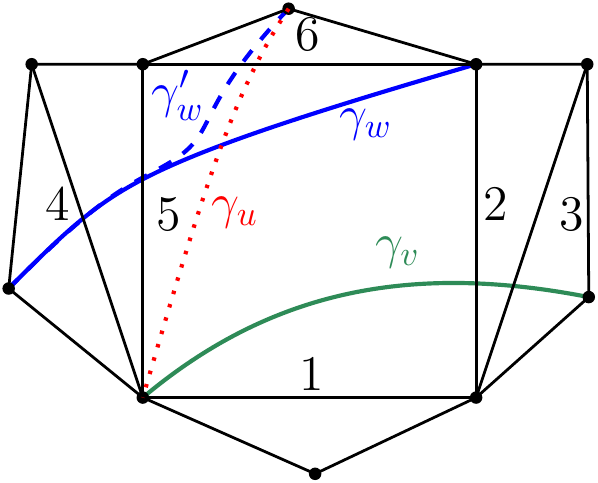}
\hskip 1cm
\includegraphics[width=4.5cm]{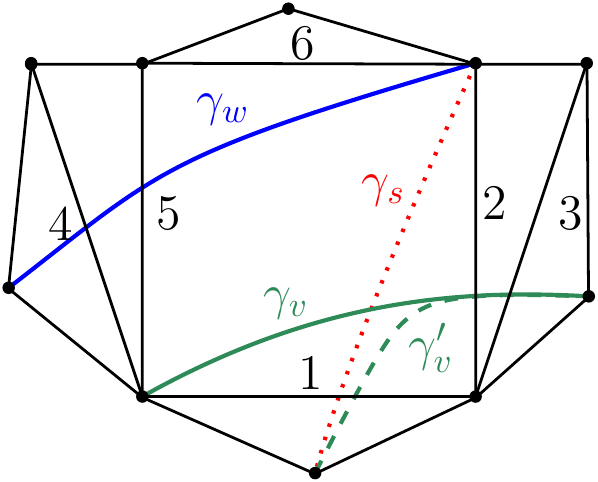}
\]
In the picture on the left, $\gamma_u$ is drawn dotted red, it is the arc corresponding to $\Omega^1(N)$ 
and the arc 
yielding a basis element for $\Ext^2(N,M)$ is $\gamma_w'$. 
Similarly, $\Ext^1(M,N)=0$ $\Ext^2(M,N)=\Ext^1(1,N)\cong K$ $\Ext^3(M,N)=\Ext^4(M,N)=$ $\Ext^5(M,N)=0$. 
In the picture on the right, the red dotted arc $\gamma_s$ corresponds to $\Omega^1(M)$, the 
arc yielding a basis element for $\Ext^2(M,N)$ is $\gamma_v'$. 
\end{ex}

\begin{ex}
Let $\Lambda$ be the gentle algebra given by the quiver below, where the relations are generated 
by $\alpha\beta$. This algebra arises from the polygon dissection on the right. 
\[
\includegraphics[height=3.2cm]{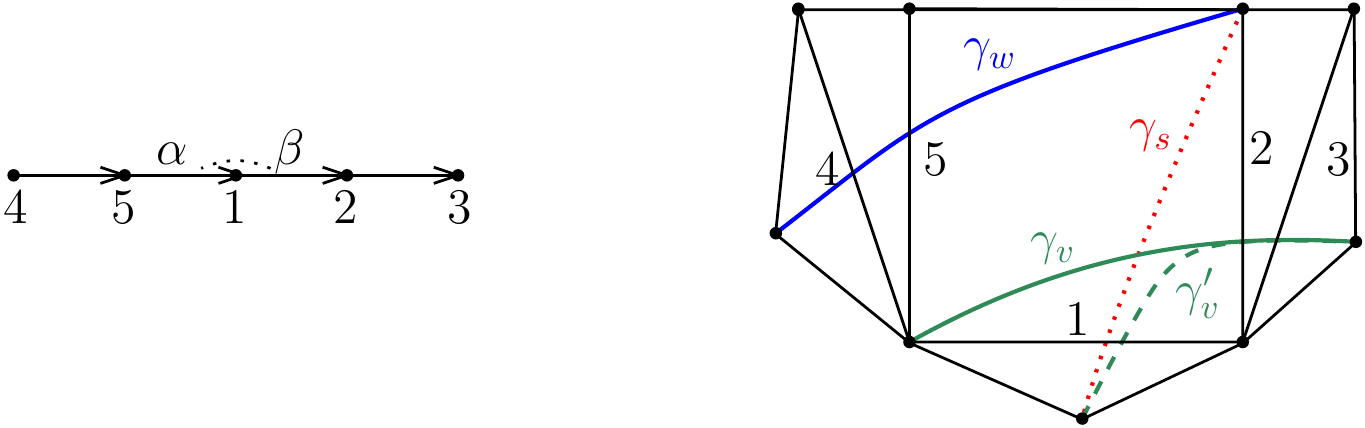}
\]
$N=M(w)=\begin{tiny}\xymatrix@C=-.04em@R=-.04em{2\\ 3}\end{tiny}$  
and $M=M(v)=\begin{tiny}\xymatrix@C=-.04em@R=-.04em{4 \\  5}\end{tiny}$. 
Here, $N$ is projective, so $\Ext^i(N,M)=0$ for all $i>0$. 
The module $M$ is not projective, but has finite projective dimension,  
\[
\xymatrix@C=.1em@R=.4em{
 \ar[rr] 0&&  P_2 \ar[rr]\ar[rdd]  && 
  P_1\ar[rr]\ar[rdd] && P_4\ar[rr] && M \\ 
  &   && && && \\
 &      
  && R(\beta)\ar[ruu] && \Omega(M) \ar[ruu] 
}
\]
with $\Omega(M)=1$ and $R(\beta)=N$. From this, we get 
$\Ext^2(M,N)=\Ext^1(\Omega(M),N)\cong K$ and $\Ext^i(M,N)=0$ for all $i>0$, $i\ne 2$. 
Again, the red dotted arc $\gamma_s$ corresponds to $\Omega^1(M)$ and the 
arc yielding a basis element for $\Ext^2(M,N)$ is $\gamma_v'$.

\end{ex}

\bibliographystyle{acm}
\bibliography{biblio}

\end{document}